\documentclass[10pt]{amsart}
\setlength{\headheight}{12pt}
\usepackage{indentfirst} 
\usepackage{graphicx} 
\usepackage{hyperref} 
\hypersetup{ 
    colorlinks,
    linkcolor={black},
    citecolor={blue},
    urlcolor={blue}
}
\usepackage{enumitem} 
\usepackage{lastpage} 
\usepackage{biblatex} 
\addbibresource{references.bib} 
\usepackage{amsmath} 
\usepackage{amsthm} 
\usepackage{amsfonts} 
\usepackage{amssymb}
\usepackage{algorithm}
\usepackage{algpseudocode}
\usepackage{caption}
\usepackage{subcaption}
\usepackage{mathtools}

\usepackage{fancyhdr} 
\pagestyle{fancy}
\fancyhf{} 
\fancyhead[L]{\nouppercase{\leftmark}} 

\fancyfoot[C]{\hrulefill\quad\raisebox{-3pt}{Page \thepage \hspace{1pt} of \pageref{LastPage}}\quad\hrulefill} 

\usepackage{cleveref}

\newtheorem{thm}{Theorem}

\theoremstyle{definition}

\theoremstyle{remark}


\newcommand{\norm}[1]{\left\Vert #1 \right\Vert}

\newcommand{\C}{\mathbb{C}}
\newcommand{\R}{\mathbb{R}}
\newcommand{\N}{\mathbb{N}}

\newcommand{\Z}{\mathbb{Z}}

\DeclareMathOperator*{\argmax}{arg\,max}

\newcommand{\bs}[1]{\boldsymbol{#1}}
\newcommand{\tbs}[1]{\widetilde{\boldsymbol{#1}}}
\newcommand{\obs}[1]{\overline{\boldsymbol{#1}}}

\raggedbottom

\begin{document}

\title{Greedy Rational Approximation for Frequency-Domain Model Reduction of Parametric LTI Systems}
\author{Filip B\v{e}l\'{i}k}
\author{Yanlai Chen}
\author{Akil Narayan}

\begin{abstract}
  We investigate model reduction of parametric linear time-invariant (LTI) dynamical systems. When posed in the frequency domain, this problem can be formulated as seeking a low-order rational function approximation of a high-order rational function. We propose to use a standard reduced basis method (RBM) to construct this low-order rational function. Algorithmically, this procedure is an iterative greedy approach, where the greedy objective is evaluated through an error estimator that exploits the linearity of the frequency domain representation. The greedy framework is motivated through theoretical results of rational approximability of functions. This framework provides a principled approach to rational compression of high-order rational functions, and provides a computational pathway for model reduction of parametric LTI systems.
\end{abstract}

\maketitle

\section{Introduction}
Model reduction -- that is, compression techniques that identify low-dimensional representations of models or data -- is becoming an increasingly important computational paradigm in design optimization, uncertainty quantification, and digital twins. We consider the problem of model reduction of parametric linear time-invariant (LTI) dynamical systems by seeking an approximation to the \textit{transfer function} in the frequency domain. By considering the problem in the frequency domain, evolution according to a differential equation is replaced with the problem of function approximation. We approximate the frequency domain transfer function by a rational function so that this corresponds to identifying a lower-dimensional dynamical system in the time domain, hence accomplishing model reduction.

Frequency domain model reduction via rational approximation of the transfer function is a well-explored topic \cite{antoulas_approximation_2005,benner-survey-2015,antoulas_interpolatory_2020-1}. The general motivation we explore, greedy approximation, another well-explored topic, is where a rational function approximant is progressively built by inspecting the maximum error of the current approximant. We expand upon the literature by presenting a computationally feasible procedure for greedy model reduction in the parametric setting. When the LTI system matrices are \textit{affine} in the parameter, our procedure greedily constructs a multivariate rational function approximation of the transfer function. 

\subsection{Main contributions}
We propose and numerically investigate a greedy frequency domain procedure for model reduction of parametric LTI systems. The high-level procedure has ingredients that appear frequently in the literature. We transform the problem of LTI model reduction in the time domain into a low-order rational approximation problem of the frequency-domain transfer function. Under affine assumptions on the parameter-dependence, the frequency domain problem can be written in an algebraic form that is amenable to applying a model reduction approach particularly effective for stationary problems. The Reduced Basis Method (RBM), applied in the context of this problem, greedily constructs a rational function approximation to the transfer function. 

We therefore propose using the RBM methodology to address model reduction for parametric LTI systems. This methodology implicitly constructs a rational approximation to the parametric transfer function. We require some non-standard modifications of traditional RBM pipelines, in particular to account for complex-valued system matrices (which naturally arise in the frequency domain) and to address the non-compact domain of the complex frequency parameter. We present modifications of the successive constraint method (SCM) to address these challenges in addition to a theoretically-motivated selection of parameter points for training.

Our numerical results, which investigate numerous LTI systems, including a time-fractional model, demonstrate that our proposed procedure can be effective, especially so for diffusive models. We also demonstrate that the approach is applicable without serious modification to parametric problems. 

\subsection{Related work}

Our proposed method overlaps with several related methods in the literature that perform frequency domain approximation of the LTI system transfer function. For non-parametric problems, moment matching methods involve forming the Taylor expansion of the transfer function around some fixed frequency value, and choosing trial and test spaces to match the first several Taylor expansion coefficients or moments \cite{benner-survey-2015,feng-posteriori-2017,feng-new-2019,Krylov_MOR}. This is done by forming the test and trial spaces given by Krylov subspaces related to the system arrays \cite{Krylov_MOR}.

One of the most common and popular model reduction techniques in the systems and control theory community is balanced truncation \cite{benner-survey-2015}. If the system is dissipative \cite{qian_model_2022} the LTI system has (time-infinite) reachibility and observatibility Gramians
that can be computationally identified through the numerical solution to dual Lyapunov equations. 
The method of balanced truncation identifies the test and trial spaces through a generalized eigenvalue problem involving these Gramians; these spaces correspond to truncations of a state space transformation of the LTI system that results in equal and diagonal, ``balanced'', Gramians.
More informally,  the balanced state transformation identifies the Hankel singular values of the system, whose decay provides an informative understanding of variable truncation, similar to how traditional singular values inform efficacy of principal component analysis.

Moment matching methods and balanced truncation can be extended to parametric problems by interpolating or aggregating reduced basis across sampled parameter values \cite{benner-survey-2015,Son2021}. However, these methods may require many solves across various parameter values or a strong understanding of which parameter values to sample. 

A data-driven framework leveraging rational interpolation is proposed in \cite{ionita_data-driven_2014}. This framework requires no identification of the full-order model, and instead only evaluations of the transfer function at various parameter and frequency values \cite{ionita_data-driven_2014}. This method allows for the choice of separate reduced orders for each parameter and for the frequency by computing the ranks of appropriate Loewner matrices. Another data-driven parametric model reduction procedure is the \textit{parametric} adaptive Antoulas-Anderson algorithm (p-AAA), which uses transfer function evaluations on a tensor-product grid to construct a multivariate rational approximation to the transfer function \cite{gugercin_paaa_2020}.

Two additional data-driven methods include temporal domain or frequency domain proper orthogonal decomposition (POD) in which solution snapshots are collected and the dominant orthogonal modes are extracted via a singular value decomposition \cite{benner-survey-2015}. In temporal domain POD, the system is simulated across various inputs and parameter values to form snapshots. Frequency domain POD is similar to the greedy method we pose in which frequency domain snapshots are used to form the reduced basis.

Hund et al. posed an optimization-based model order reduction method for parametric LTI systems by performing gradient-based optimization over the space of reduced matrices \cite{hund_optimization-based_2022}. They demonstrate the ability to form reduced transfer functions which are locally $\mathcal{H}_2$-frequency optimal and $\mathcal{L}^2$-parameter optimal.

In contrast to the above approaches, the procedure we propose requires access to the full model matrices but greedily constructs a parametric rational approximation through a residual-based error estimator. 

The remainder of this paper is organized as follows. Section  \ref{sec:bg} establishes the necessary background on parametric LTI dynamical systems and their classical surrogate systems before reformulating the model reduction task as a rational approximation problem in the frequency domain. Section \ref{sec:wgreedy} details the weak greedy framework, discussing the theoretical motivation based on the sectorial properties and describing the essential computational components, including the residual-based error estimator and the requisite modifications to the Successive Constraint Method (SCM). In Section \ref{sec:numerics}, we present numerical experiments on varying benchmarks, including a time-fractional model where the fractional derivative is treated as a parameter, to demonstrate the efficacy and convergence rates of the proposed procedure. Finally, Section \ref{sec:conclusion} offers concluding remarks.

\section{Background}

\label{sec:bg}

\subsection{Linear dynamical systems}
We consider a linear time-invariant (LTI) system with state variable $\bs x(t) \in \C^n$, input/control variable $\bs u(t) \in \C^p$, and output variable $\bs y(t)\in \C^q$, 
\begin{equation}
    \begin{cases}
        \frac{d}{dt} \bs x(t;\bs p) = \bs A(\bs p) \bs x(t;\bs p) + \bs B(\bs p) \bs u(t),\\
        \bs y(t;\bs p) = \bs C(\bs p) \bs x(t;\bs p),\\
        \bs x(0;\bs p) = \bs x_0(\bs p),
    \end{cases}
    \label{eq:lti}
\end{equation}
for $t\geq0$. The variable $\bs p \in \mathcal{P} \subset \C^d$ is a \textit{parameter} that ranges over a closed and bounded set $\mathcal{P}$, and its (time-independent) value affects the dynamics of the system by influencing the matrices $(\bs{A}(\bs{p}), \bs{B}(\bs{p}), \bs{C}(\bs{p}))$. Hence, this is a \textit{parametric} LTI problem.
Several examples of LTI systems are provided at \cite{morwiki}. In the context of model reduction we assume a large state size, $n\gg1$, and small input and output sizes, $p,q\ll n$, where often $p = 1$ and/or $q = 1$ \cite{penzl_algorithms_2006,benner-survey-2015}.

For a fixed parameter value $\bs{p} \in \mathcal{P}$, the system \eqref{eq:lti} is \textit{stable} if the spectrum (set of eigenvalues) of $\bs A(\bs{p})$ has negative real part, and has the stronger property of \textit{dissipativity} if the numerical range of $\bs{A}(\bs{p})$ is a set in $\C$ only containing elements with negative real part.  The numerical range of a matrix $\bs M\in\C^{n\times n}$ is defined as 
\begin{align}\label{eq:numerical-range}
W(\bs M) = \left\{\frac{\bs x^* \bs M \bs x}{\bs x^* \bs x} \; \bigg| \; \bs x \in \C^n \setminus \{\bs 0\} \right\}.
\end{align}
We can generalize these notions to parameter-dependent problems: We say \eqref{eq:lti} is stable if, for all $\bs{p} \in \mathcal{P}$, the spectrum of $\bs A(\bs p)$ lies in the left-half complex plane. It is similarly dissipative if, for all $\bs{p} \in \mathcal{P}$, $W(\bs A(\bs p))$ has only negative real part. We will assume throughout that \eqref{eq:lti} is dissipative in the above sense. Our algorithm does not require dissipativity (only stability), but the motivating theory of our approach does require dissipativity. In particular, applying our algorithm to non-dissipative systems can in principle result in unstable systems. (We have not explored when this happens in practice.) For computational efficacy, we also assume throughout that $\bs{A}$, $\bs{B}$, and $\bs{C}$ have affine dependence on the parameter $\bs{p}$, see \Cref{ssec:affine}.

Problems of the form \eqref{eq:lti} are computationally expensive to solve across different controls, initial conditions, and parameter values. As a result, we seek a reduced system,
\begin{equation}
    \begin{cases}
        \frac{d}{dt}{\tbs x}(t;\bs p) = \tbs A(\bs p) \tbs x(t;\bs p) + \tbs B(\bs p) \bs u(t),\\
        \tbs y(t;\bs p) = \tbs C(\bs p) \tbs x(t;p),\\
        \tbs x(0) = \tbs x_0(\bs p),
    \end{cases}
    \label{eq:lti_red}
\end{equation}
with $\tbs x, \tbs x_0 \in \C^{r}$, $\tbs A \in \C^{r\times r}$, $\tbs B \in \C^{r\times p}$, $\tbs y\in\C^q$, and $\tbs C\in\C^{q\times r}$ with $r\ll n$ such that we can accurately replicate the dynamics of the output variable, $\bs y(t;\bs p) \approx \tbs y(t;\bs p)$. Even if the dynamics of $\bs x$ are not well described in a lower-dimensional space, the dynamics of the smaller dimensional output $\bs{y}$ might be accurately described by such a system.

A common strategy for construction of a reduced order model \eqref{eq:lti_red} from the full LTI system \eqref{eq:lti} is to view these systems in the frequency domain. Upon applying the Laplace transform to \eqref{eq:lti}, assuming sufficient exponential decay of $\bs x$ and homogeneous initial conditions, $\bs x_0 = \bs 0$, and with $\bs{X}$ and $\bs{U}$ the Laplace transformed variables of $\bs x$, $\bs u$, respectively, then \eqref{eq:lti} becomes an algebraic equation with frequency variable, $s \in \C$,
\begin{equation}
    \begin{aligned}
        s \bs X(s;\bs p) &= \bs A(\bs p) \bs X(s;\bs p) + \bs B(\bs p) \bs U(s),\\
        \bs Y(s;\bs p) &= \bs C(\bs p) \bs X(s;\bs p).\\
    \end{aligned}
    \label{eq:lti_freq}
\end{equation}
The $\bs{U}$-to-$\bs{X}$ and $\bs{U}$-to-$\bs{Y}$ mappings are given by $\bs{W}$ and $\bs{H}$, respectively:
\begin{subequations}
\begin{align}
  \bs X(s;\bs p) &= (s\bs I - \bs A(\bs p))^{-1} \bs B(\bs p) \bs U(s) =: \bs W(s;\bs p) \bs U(s),
    \label{eq:W_transfer}\\
    \bs Y(s;\bs p) &= \bs C(\bs p) \bs X(s;\bs p) = \bs C(\bs p) \bs W(s;\bs p) \bs U(s) =: \bs H(s;\bs p) \bs U(s).
    \label{eq:H_transfer}
\end{align}
\end{subequations}
We have defined the parameter dependent transfer functions, $\bs W(s;\bs p) \in \C^{n\times p}$ and $\bs H(s;\bs p) \in \C^{q \times p}$, respectively. We will call $\bs H$ the \textit{output transfer function} and $\bs W$ the \textit{state transfer function}. For fixed $\bs{p}\in\mathcal{P}$, these functions are (generically order-$n$) rational functions of the frequency variable, $s$. Numerous model reduction strategies for fixed-$\bs{p}$ dynamical systems construct order-$r$ $s$-rational function approximations to the output transfer function which amounts to identifying a dimension-$r$ linear dynamical system \eqref{eq:lti_red} that approximates \eqref{eq:lti} \cite{antoulas_approximation_2005,penzl_algorithms_2006,antoulas_interpolatory_2020-1}. If $\bs{A}, \bs{B}, \bs{C}$ are all affine functions of $\bs{p}$, then both $\bs{W}$ and $\bs{H}$ are rational functions of $(s,\bs{p})$.

\subsection{Model reduction and projections}
One way to accomplish the previously described reduction is through a Petrov-Galerkin projection \cite{benner-survey-2015}. If we form trial and test spaces with bases given by the columns of the semiunitary matrices $\bs \Psi \in \C^{n\times r}$ and $\bs \Phi \in \C^{n\times r}$ respectively, then one way to form the reduced system matrices is by $\tbs A = \bs \Psi^* \bs A \bs \Phi$, ${\tbs B} = \bs \Psi^* \bs B$, and ${\tbs C} = \bs C \bs \Phi$ (and $\tbs x_0 = \bs \Phi^* \bs x_0$). The full-order model, \eqref{eq:lti}, is then approximated by $\bs x \approx \bs \Phi \tbs x$ and $\bs y \approx \tbs y$. A Galerkin projection corresponds to $\bs \Psi = \bs \Phi$. 

Stability and dissipativity are preserved under the more restrictive Galerkin projection. General Petrov-Galerkin projections do not enjoy such stability/dissipativity guarantees, but the flexibility of a Petrov-Galerkin approach allows for better reduced approximation.

Error estimates for the reduced model \eqref{eq:lti_red} compared to the original model \eqref{eq:lti} can be derived from function approximation errors in the corresponding transfer functions. The transfer functions of the reduced system are given by,
\begin{align}
    \tbs W(s;\bs p) &= (s\bs I - \tbs A(\bs p))^{-1} \tbs B(\bs p), & \tbs H(s;\bs p) &= \tbs C(\bs p) \tbs W(s;\bs p). 
    \label{eq:reduced-transfer-funcs}
\end{align}
We reiterate that, for fixed $\bs{p}$, the full-order transfer function $\bs{H}$ is a order-$n$ rational function of the frequency parameter $s$, while the reduced order transfer function is an order-$r$ rational function. Thus, one viewpoint of the frequency-domain-based reduced-order modeling is that of rational approximability: how well can we approximate an order-$n$ rational function by an order-$r$ one?

For fixed $\bs p \in \mathcal{P}$, the $\mathcal{H}_\infty$-norm error between the full- and reduced-order transfer functions is given by
\begin{equation}
    \norm{\bs H(\cdot;\bs p) - \tbs H(\cdot;\bs p)}_{\mathcal{H}_\infty} = \sup_{\omega \in \mathbb{R}} \norm{\bs H(i\omega;\bs p) - \tbs H(i\omega;\bs p)}_{2}. \label{h-inf-err}
\end{equation}
One can similarly define a $\mathcal{H}_2$-norm as an $L^2$-type frequency norm. See, e.g., \cite{antoulas_approximation_2005,benner-survey-2015}. The frequency-domain error \eqref{h-inf-err} provides an understanding of time-domain errors. 
\begin{thm}[\cite{benner-survey-2015}]\label{thm:h_infty}
     For a given input $\bs u(t)$ with bounded $L^2(0,\infty)$ norm, and a given parameter vector $\bs p \in \mathcal{P}$, the $L^2(0,\infty)$ error between the outputs of the full-order \eqref{eq:lti} and reduced-order \eqref{eq:lti_red} systems is bounded by
    \begin{equation}
        \norm{\bs y(\cdot;\bs p) - \tbs y(\cdot;\bs p)}_{L^2(0,\infty)} \leq \norm{\bs H(\cdot;\bs p) - \tbs H(\cdot;\bs p)}_{\mathcal{H}_\infty} \norm{\bs u}_{L^2(0,\infty)}. \label{h-inf-l2-comp}
    \end{equation}
\end{thm}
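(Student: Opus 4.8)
The plan is to reduce the time-domain bound to a frequency-domain (Plancherel) computation, with the parameter $\bs p \in \mathcal{P}$ held fixed throughout. First I would set $\bs e(t) := \bs y(t;\bs p) - \tbs y(t;\bs p)$. Since both \eqref{eq:lti} and \eqref{eq:lti_red} are stable and $\bs u \in L^2(0,\infty)$, standard input--output theory gives $\bs y, \tbs y \in L^2(0,\infty)$, hence $\bs e \in L^2(0,\infty)$, and by the Paley--Wiener theorem its Laplace transform extends to an $L^2$ trace on the imaginary axis. By linearity of the Laplace transform together with \eqref{eq:H_transfer} and \eqref{eq:reduced-transfer-funcs}, this trace is
\begin{equation*}
  \bs E(i\omega) = \bigl(\bs H(i\omega;\bs p) - \tbs H(i\omega;\bs p)\bigr)\bs U(i\omega), \qquad \omega \in \R,
\end{equation*}
where $\bs U$ is the Laplace transform of $\bs u$, likewise an $L^2$ function on $i\R$ by Paley--Wiener.

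Next I would invoke the Plancherel theorem in the normalization for which the Laplace transform restricted to $i\R$ is, up to the factor $1/\sqrt{2\pi}$, an isometry of $L^2(0,\infty)$ onto the Hardy subspace of $L^2(i\R)$:
\begin{equation*}
  \norm{\bs e}_{L^2(0,\infty)}^2 = \frac{1}{2\pi}\int_{-\infty}^{\infty} \norm{\bs E(i\omega)}_2^2 \dif\omega .
\end{equation*}
The key pointwise estimate is submultiplicativity of the induced matrix $2$-norm against the vector $2$-norm: for each $\omega \in \R$,
\begin{equation*}
  \norm{\bs E(i\omega)}_2 \le \norm{\bs H(i\omega;\bs p) - \tbs H(i\omega;\bs p)}_2\,\norm{\bs U(i\omega)}_2 \le \norm{\bs H(\cdot;\bs p) - \tbs H(\cdot;\bs p)}_{\mathcal{H}_\infty}\,\norm{\bs U(i\omega)}_2 ,
\end{equation*}
the last inequality being the definition \eqref{h-inf-err}. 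Squaring, integrating, pulling out the $\omega$-independent $\mathcal{H}_\infty$ factor, and applying Plancherel once more to $\bs u$ gives
\begin{equation*}
  \norm{\bs e}_{L^2(0,\infty)}^2 \le \norm{\bs H(\cdot;\bs p) - \tbs H(\cdot;\bs p)}_{\mathcal{H}_\infty}^2 \cdot \frac{1}{2\pi}\int_{-\infty}^{\infty}\norm{\bs U(i\omega)}_2^2\dif\omega = \norm{\bs H(\cdot;\bs p) - \tbs H(\cdot;\bs p)}_{\mathcal{H}_\infty}^2\,\norm{\bs u}_{L^2(0,\infty)}^2 ,
\end{equation*}
and taking square roots yields \eqref{h-inf-l2-comp}.

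The main obstacle is functional-analytic bookkeeping rather than any hard inequality: one must justify that the Laplace transforms of $\bs u$, $\bs y$, $\tbs y$ genuinely extend to (and have $L^2$ boundary traces on) the imaginary axis, and that Plancherel applies in that Hardy-space form. This is precisely where stability of both systems is used — it ensures $\bs H, \tbs H$ are analytic and bounded on the open right half-plane with $\mathcal{H}_\infty$-norm equal to the supremum over $i\R$ by the maximum modulus principle, so that $\bs y, \tbs y \in L^2(0,\infty)$ whenever $\bs u$ is, and all transforms in play are honest elements of the relevant $L^2$/Hardy spaces. Once this setup is recorded (or cited), the remaining steps are the elementary chain of inequalities above.
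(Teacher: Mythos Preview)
Your argument is correct and is the standard proof of this well-known $\mathcal{H}_\infty$ gain bound. Note, however, that the paper does not actually prove \Cref{thm:h_infty}: it is stated as a cited result from \cite{benner-survey-2015} with no accompanying proof, so there is no in-paper argument to compare against. Your Plancherel/Hardy-space derivation is exactly the textbook route one would expect behind that citation.
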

Hence, one way to investigate the accuracy of a reduced order model is by making the $\mathcal{H}_\infty$-error small.
In particular, \Cref{thm:h_infty} implies that the transfer function errors along the imaginary axis in the frequency domain, i.e., $s = i \omega$ with $\omega\in\R$, is sufficient, at least in the context of this discussion. Therefore, in what follows will restrict attention to $s$ along the imaginary axis and write $s = i \omega$.

\section{Weak greedy approximation in the frequency domain}

\label{sec:wgreedy}

\subsection{Stationary Frequency Domain Problem}

For brevity, for $m\in\N$, we let $[m]$ denote the set of positive integers less than or equal to $m$, $[m]=\{1,2,\ldots,m\}$. 
Denoting $\bs w_k(s;\bs p)$ the $k$'th column of the state transfer function $\bs W(s;\bs p)$,
each column, $\bs{w}_k$, is the solution to an $(s,\bs{p})$-parametric linear problem,
\begin{align}
  (s\bs I - \bs A(\bs p)) \bs w_k(s;\bs p) &= \bs b_k(\bs p), & s \in i\R,  \hskip 5pt \bs{p} \in \mathcal{P}, \hskip 5pt k\in [p], \label{eq:transfer_function_stationary}
\end{align} 
where $\bs b_k(\bs p)$ is the $k$'th column of $\bs B(\bs p)$.
As suggested above, one could consider the above problem as a $(s,\bs{p},k)$-parametric problem for vectors $\bs{w}$ indexed by $(s,\bs{p},k)$. To reduce notational clutter moving forward, we codify this now by making the following simplifications:
\begin{subequations}\label{eq:notation-simplification}
\begin{enumerate}
\item We concatenate the original set of parameters $\bs{p} \in \mathcal{P}$, the frequency $s = i \omega$, and the column index $k \in [p]$ into a single parameter vector of size $\dim (\mathcal{P}) + 2 = d+2$. In this section, we will write this concatenated vector as $\bs{P} \in \widetilde{\mathcal{P}}$, i.e.,
  \begin{align}
    \widetilde{\mathcal{P}} \ni \bs{P} \coloneqq (s,\bs{p},k) \in i \R \times \mathcal{P} \times [p].
  \end{align}
\item With the above simplification, we then simply write the columns of the frequency domain solution, $\bs{W}$, as $\bs{w}_k = \bs{w}(\bs{P}) \in \C^n$, which depends on $k$ through $\bs{P}$.  We also write $\bs{b}(\bs{P})$, which encapsulates right-hand side dependence both on the original $\mathcal{P}$-valued parameter and also the column/input index $k$.
\item We write 
  \begin{align}
    \bs M(\bs{P}) = s \bs{I} - \bs{A}(\bs{p})
  \end{align}
  to denote the parameter dependent left-hand side matrix.
\end{enumerate}
\end{subequations}
With these changes, our original problem \eqref{eq:transfer_function_stationary} is equivalent to the following problem:
\begin{equation}
    \bs{M}(\bs P) \bs w(\bs P) = \bs b(\bs P), \label{eq:linearmodel-stationary}
\end{equation}
where the left-hand side matrix, $\bs{M}(\bs P)\in\C^{n\times n}$, is assumed to be invertible over all parameters, $\bs P \in \widetilde{\mathcal{P}} \subset \C^{d+2}$, and we wish to solve for $\bs w(\bs P) = \bs M(\bs P)^{-1} \bs b(\bs P) \in \C^n$. 

Linear model reduction of the system \eqref{eq:linearmodel-stationary} requires a matrix pair $\bs \Phi, \bs{\Psi} \in \C^{n\times r}$. The approximation is the Petrov-Galerkin projection of 
\eqref{eq:transfer_function_stationary}, which yields the system 
\begin{align}
  \tbs M(\bs{P}) \tbs w(\bs P) &= \tbs b(\bs P), \label{eq:transfer_function_stationary_reduced}
\end{align}
where the reduced matrix $\tbs M(\bs{P}) = \bs{\Psi}^\ast \bs{M}(\bs{P}) \bs{\Phi}$, corresponding to the reduced matrices in the reduced LTI problem \eqref{eq:lti_red}, and $\bs w(\bs P) \approx \bs\Phi\tbs w(\bs P)$. 
We note that a $\bs{P}$-uniform ROM error in $\bs{w}$ translates to similar ROM errors for the frequency domain transfer function and also to ROM time domain errors.
\begin{thm}
  Suppose an order-$r$ reduced order model \eqref{eq:transfer_function_stationary_reduced} is formed such that
    \begin{align*}
      \|\bs w(\bs P) - \bs\Phi\tbs w(\bs P)\|_2 &\leq \varepsilon & \forall \; \bs{P} = (s,\bs{p},k) \in i \R \times \mathcal{P} \times [p].
    \end{align*}
    Then, with $\bs{\Phi}\in\C^{n\times r}$ the trial basis for the order-$r$ reduced model, the $\mathcal{H}_\infty$-error between the full-order model \eqref{eq:lti} and the reduced-order model \eqref{eq:lti_red} satisfies
    \begin{align*}
        \left\|\bs H(\cdot;\bs p) - \tbs H(\cdot;\bs p)\right\|_{\mathcal{H}_\infty} &\leq \varepsilon\sqrt{p}\|\bs C(\bs p)\|_2 & \forall \; \bs p\in\mathcal{P},
    \end{align*}
    which, by \Cref{thm:h_infty}, bounds the output error by
    \begin{align*}
        \left\|\bs y(\cdot;\bs p) - \tbs y(\cdot;\bs p)\right\|_{L^2(0,\infty)} &\leq \varepsilon\sqrt{p}\|\bs C(\bs p)\|_2\|\bs u\|_{L^2(0,\infty)} & \forall \; \bs p\in\mathcal{P}.
    \end{align*}
\end{thm}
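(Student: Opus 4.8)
The plan is to bound the frequency-domain transfer-function error pointwise in $\omega$ by the columnwise stationary error, take a supremum over $\omega$, and then invoke \Cref{thm:h_infty} for the last inequality. First I would set up the correspondence between the reduced stationary problem \eqref{eq:transfer_function_stationary_reduced} and the matrix transfer functions \eqref{eq:reduced-transfer-funcs}: for $\bs P=(i\omega,\bs p,k)$, the vector $\bs w(\bs P)$ is the $k$-th column of $\bs W(i\omega;\bs p)$, and $\bs\Phi\tbs w(\bs P)$ is the $k$-th column of $\bs\Phi\tbs W(i\omega;\bs p)$. Since $\tbs C(\bs p)=\bs C(\bs p)\bs\Phi$, we have $\tbs H(i\omega;\bs p)=\tbs C(\bs p)\tbs W(i\omega;\bs p)=\bs C(\bs p)\bs\Phi\tbs W(i\omega;\bs p)$, so
\begin{equation*}
  \bs H(i\omega;\bs p)-\tbs H(i\omega;\bs p)=\bs C(\bs p)\bigl(\bs W(i\omega;\bs p)-\bs\Phi\tbs W(i\omega;\bs p)\bigr),
\end{equation*}
and the $k$-th column of the matrix in parentheses is exactly $\bs w(\bs P)-\bs\Phi\tbs w(\bs P)$, which by hypothesis has $2$-norm at most $\varepsilon$.

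Next, for fixed $\omega\in\R$ and $\bs p\in\mathcal{P}$, I would apply submultiplicativity of the spectral norm and then bound the spectral norm of the $n\times p$ error matrix by its Frobenius norm,
\begin{equation*}
  \norm{\bs H(i\omega;\bs p)-\tbs H(i\omega;\bs p)}_2\le\norm{\bs C(\bs p)}_2\,\norm{\bs W(i\omega;\bs p)-\bs\Phi\tbs W(i\omega;\bs p)}_2\le\norm{\bs C(\bs p)}_2\,\norm{\bs W(i\omega;\bs p)-\bs\Phi\tbs W(i\omega;\bs p)}_F,
\end{equation*}
and then expand the Frobenius norm as the root-sum-of-squares of the $p$ column $2$-norms, each bounded by $\varepsilon$, to obtain $\norm{\bs W(i\omega;\bs p)-\bs\Phi\tbs W(i\omega;\bs p)}_F\le\sqrt{p}\,\varepsilon$; this is the step producing the factor $\sqrt{p}$. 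Taking the supremum over $\omega\in\R$ gives $\norm{\bs H(\cdot;\bs p)-\tbs H(\cdot;\bs p)}_{\mathcal{H}_\infty}\le\varepsilon\sqrt{p}\,\norm{\bs C(\bs p)}_2$, and substituting this bound into \Cref{thm:h_infty} yields the claimed $L^2(0,\infty)$ output estimate.

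The argument is essentially bookkeeping, so I do not anticipate a genuine obstacle. The only points requiring a little care are the identification of $\tbs w(\bs P)$ with the columns of $\tbs W$ — which relies on the reduced operator $\tbs M(\bs P)=\bs\Psi^\ast\bs M(\bs P)\bs\Phi$ coinciding with $i\omega\bs I-\tbs A(\bs p)$, i.e.\ on the bi-orthogonality $\bs\Psi^\ast\bs\Phi=\bs I$ implicit in \eqref{eq:lti_red} — and the decision to pass through the Frobenius norm rather than bounding $\norm{\cdot}_2$ directly by $\sqrt{p}$ times the largest column norm; both routes give the same constant, but the Frobenius route is cleanest.
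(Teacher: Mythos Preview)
Your proposal is correct and follows essentially the same route as the paper's proof: factor $\bs H-\tbs H=\bs C(\bs W-\bs\Phi\tbs W)$, apply submultiplicativity of the spectral norm, pass to the Frobenius norm, expand as the root-sum-of-squares of the $p$ column errors, and take the supremum over $\omega$. Your additional remark about the implicit bi-orthogonality $\bs\Psi^\ast\bs\Phi=\bs I$ needed to identify $\tbs M(\bs P)$ with $i\omega\bs I-\tbs A(\bs p)$ is a worthwhile clarification that the paper leaves tacit.
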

\begin{proof}
    \begin{align*}
        \left\|\bs H(\cdot;\bs p) - \tbs H(\cdot;\bs p)\right\|_{\mathcal{H}_\infty} &= \sup_{\omega\in\R}\left\|\bs H(i\omega;\bs p) - \tbs H(i\omega;\bs p)\right\|_2\\
        &\leq \sup_{\omega\in\R}\|\bs C(\bs p)\|_2\left\|\bs W(i\omega;\bs p) - \bs \Phi\tbs W(i\omega;\bs p)\right\|_2\\
        &\leq \sup_{\omega\in\R}\|\bs C(\bs p)\|_2\left\|\bs W(i\omega;\bs p) - \bs \Phi\tbs W(i\omega;\bs p)\right\|_F\\
        &= \sup_{\omega\in\R}\|\bs C(\bs p)\|_2\left(\sum_{k=1}^p \left\|\bs w_k(i\omega;\bs p) - \bs\Phi\tbs w_k(i\omega;\bs p)\right\|_2^2\right)^{1/2}\\
        &\leq \varepsilon\sqrt{p}\|\bs C(\bs p)\|_2,
    \end{align*}
    yielding the desired $\mathcal{H}_\infty$ bound. 
\end{proof}

On the stationary parametric problem \eqref{eq:linearmodel-stationary}, we employ the reduced basis method (RBM) \cite{patera_reduced_2007,hesthaven_certified_2016,quarteroni_reduced_2016}, and we describe this standard procedure below. In the ideal setting, one chooses the reduced basis matrices to minimize a worst-case error:
\begin{align}\label{eq:n-width-approx}
  \min_{\bs\Phi, \bs{\Psi} \;:\;\text{dim}(\bs \Phi) = \text{dim}(\bs{\Psi}) = r} \max_{\bs P \in \widetilde{P}} \| \bs{w}(\bs{P}) - \bs{\Phi} \widetilde{\bs{w}}(\bs{P})\|_2,
\end{align}
which is closely related to the Kolmogorov $n$-width. However, since this problem is computationally intractable as posed, one approximates the optimal solution by a greedy iterative search. In particular, the (standard) reduced basis method uses a Galerkin projection ($\bs{\Psi} = \bs{\Phi}$), and ideally forms the reduced basis matrix, $\bs \Phi$, through a (strong) greedy procedure, which iteratively defines the columns of $\bs{\Phi}$ as:
\begin{align}\label{eq:strong-greedy}
  \bs{\Phi}_k &= [ \bs w(\bs P_1), \ldots, \bs w(\bs P_k) ], & \bs{P}_{k+1} \in \argmax_{\bs{P} \in \widetilde{\mathcal{P}}} \| \bs{e}_{\bs \Phi_k}(\bs{P})\|_2,
\end{align}
where $\bs{e}_{\bs \Phi_k}(\bs{P})$ is the error in the order-$k$ ROM:
\begin{align}\label{errore}
  \bs{e}_{\bs \Phi_k}(\bs{P}) &= \bs{w}(\bs{P}) - \bs{\Phi}_k \tbs w(\bs{P}) = \bs w(\bs P) - \bs{\Phi}_k (\bs{\Phi}_k^* \bs M(\bs P) \bs \Phi_k)^{-1} \bs b(\bs P),
\end{align}
and the final, order-$r$ reduced basis is given by $\bs \Phi = \bs \Phi_r$.
In practice, there is an orthogonalization step to mitigate numerical redundancy of the columns of $\bs{\Phi}$.

The innovations that make RBM popular are (i) algorithmic procedures that make the evaluation of the ROM, $\tbs w(\bs{P})$, a computation that depends on $r$ but is entirely independent of the FOM size, $n$, and (ii) a strategy to approximate the norm of $\bs{e}_{\bs\Phi}(\bs{P})$ without actually computing the solution $\bs{w}(\bs{P})$. Before discussing these computational considerations, we provide a short background on theory motivating greedy-type algorithms of the above form.

\subsection{Theoretical Motivation}\label{ssec:theory}
We briefly discuss some theoretical results that motivate using RBM for rational approximation of transfer functions. 

One initial set of results establishes theoretical feasibility. The best possible approximation by a rank-$r$ linear reduced model is described by the Kolmogorov $n$-width; this $n$-width is a modification of \eqref{eq:n-width-approx}, where the argument to the norm is replaced by the best-$\ell^2$ approximation of $\bs{w}(\bs{P})$ in $\mathrm{range}(\bs{\Phi})$. Fast decay of the $n$-width suggests that linear model reduction can be effective if a near-optimal subspace can be found. It is known that the $n$-width decays quickly in $n$ (exponentially, root-exponentially, or algebraically) for $\bs{P}$-smooth and -affine operators \cite{cohen_approximation_2015,cohen_kolmogorov_2016}, which is the case for our frequency-domain problem \eqref{eq:linearmodel-stationary}. Building on this, strong and weak greedy schemes to identify a linear subspace for model reduction achieve error decay comparable to the $n$-width decay \cite{maday_global_2002,binev_convergence_2011,buffa_priori_2012,devore_greedy_2013}. Under our assumption of dissipativity of the system \eqref{eq:lti}, these established estimates for reduced basis approximation hold for our problem \eqref{eq:linearmodel-stationary}.

A somewhat more technical and precise motivation for our procedure is provided by a result in the upcoming preprint \cite{chen_hinfty_2026}. To articulate this result, we require some additional notation. Consider the single-input, single-output nonparametic version of problem \eqref{eq:lti}, so that \eqref{eq:linearmodel-stationary} has only one parameter $\bs{P} = \omega \in \R$. Suppose that $\bs{A}$ is sectorial, i.e., that the numerical range $W(\bs{A})$ defined in \eqref{eq:numerical-range} is contained in a left half-plane sector in $\C$. 
Preciously, a matrix $\bs A$ is sectorial if there exist constants $m\geq0$ and $\omega\in\R$ such that
\begin{align}\label{eq:sectorial}
W(\bs A) \subset \Sigma_m(i\omega)\setminus\{i\omega\},
\end{align}
where $\Sigma_m(i\omega)$ is the shifted cone in the complex plane visualized in \Cref{fig:sectorial_region} defined as
\[\Sigma_m(i\omega) = \left\{i\omega + \alpha(-1 + i m) + \beta(-1 - i m) \in \C \;\mid\; \alpha,\beta \geq 0 \right\}.\]
We assume $m$ to be the smallest possible value such that \eqref{eq:sectorial} holds.
\begin{figure}[htbp]
    \centering
    \includegraphics[width=0.6\textwidth]{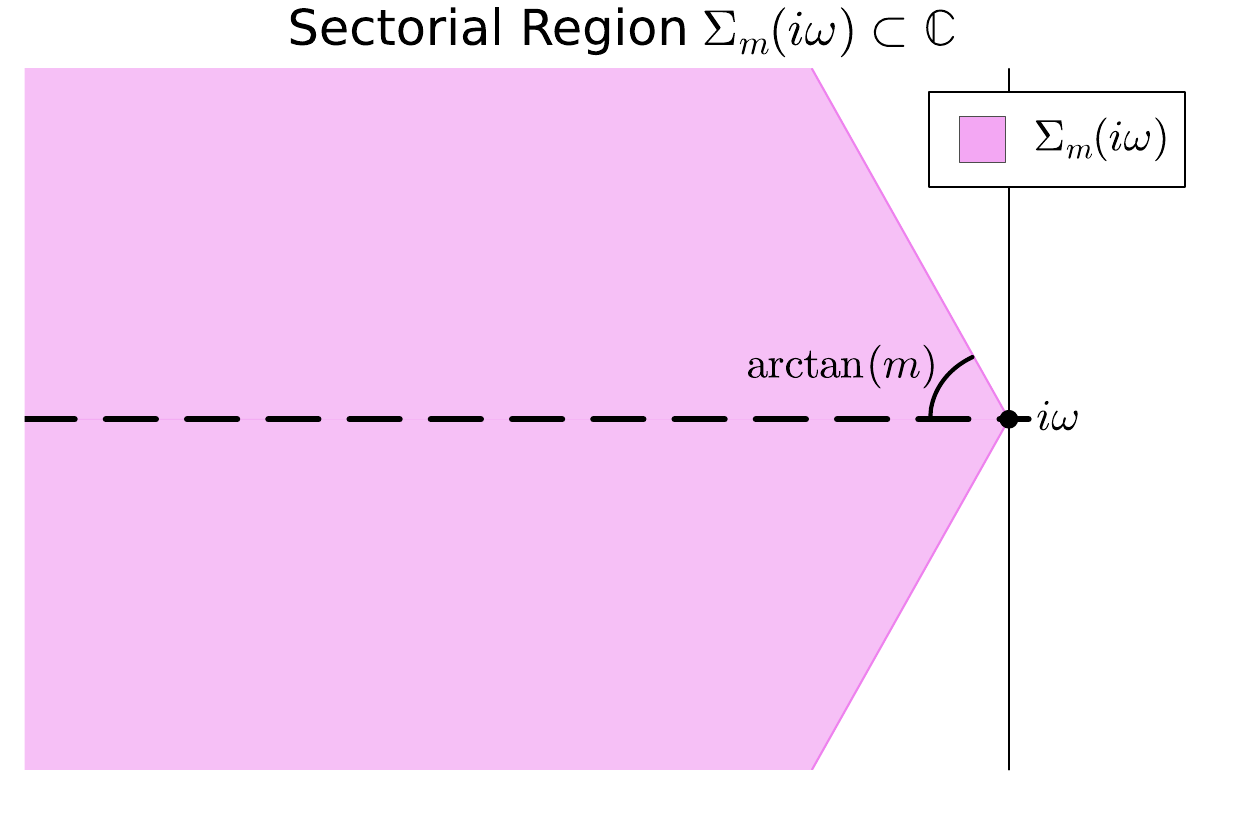}
    \caption{Region in definition of sectorial matrix.}
    \label{fig:sectorial_region}
\end{figure}
The following upcoming result in \cite{chen_hinfty_2026} then follows: if an RBM approximation is constructed in the strong greedy sense \eqref{eq:strong-greedy}, then 
\begin{align}\label{eq:sectorial-estimate}
\inf_{\bs \Phi \in \C^{n\times r}} \|\bs H(\cdot) - \tbs H(\cdot)\|_{\mathcal{H}_\infty} \lesssim \exp\left(\frac{-r}{m}\right).
\end{align}
In particular, this estimate identifies that the value of $m$ quantitatively indicates the efficacy of linear model reduction. Small values of $m$ suggest the poles of the system do not cluster near the imaginary axis, resulting in effective linear model reduction. Large values of $m$ suggest that the poles cluster along (or at least the numerical range nearly abuts) the imaginary axis, making linear model reduction difficult. This intuitive interpretation does translate into practical behavior, see \Cref{fig:sectorial} for a simple example of Hankel singular-value decay for simple, artificial, dimension-1000 LTI systems with varying sectorial parameter, $m$.
\begin{figure}[htbp]
    \centering
    \includegraphics[trim={0 0 4.8cm 0},clip,height=0.35\textwidth]{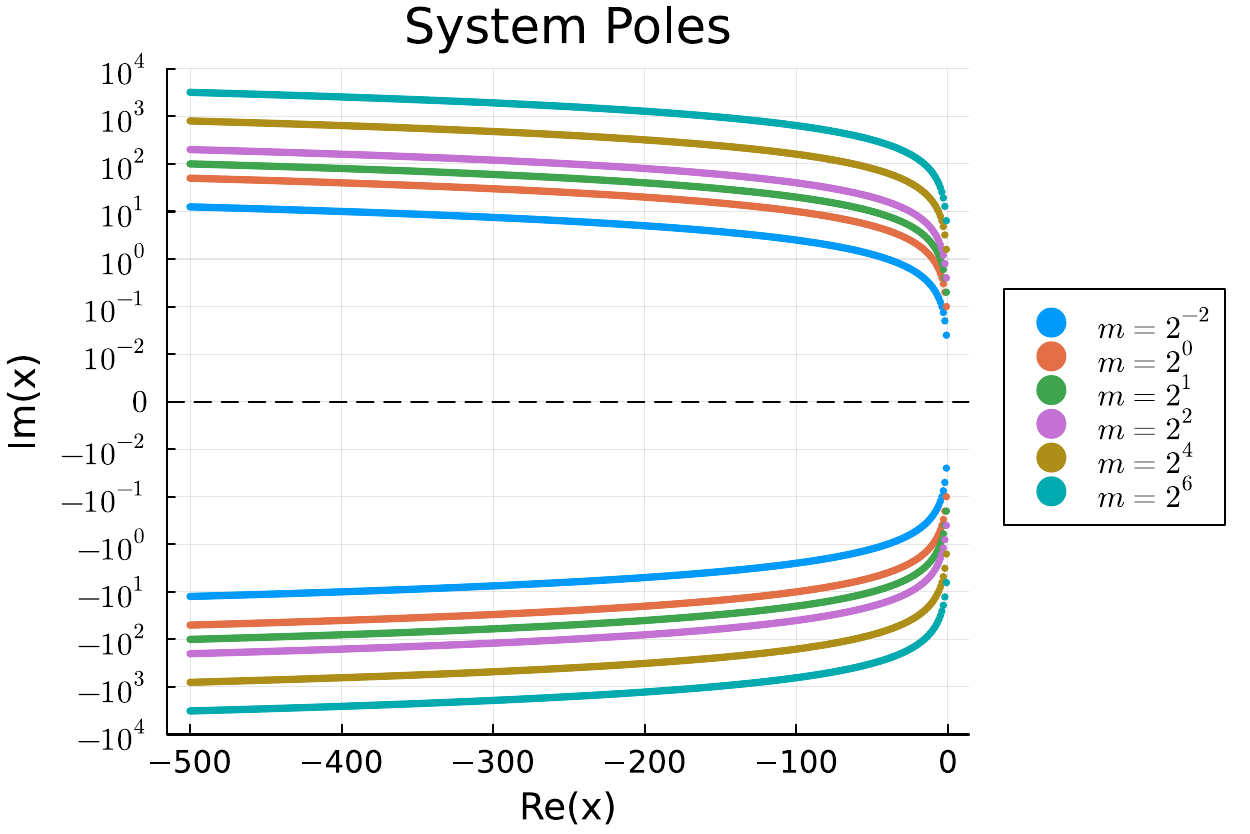}
    \includegraphics[height=0.35\textwidth]{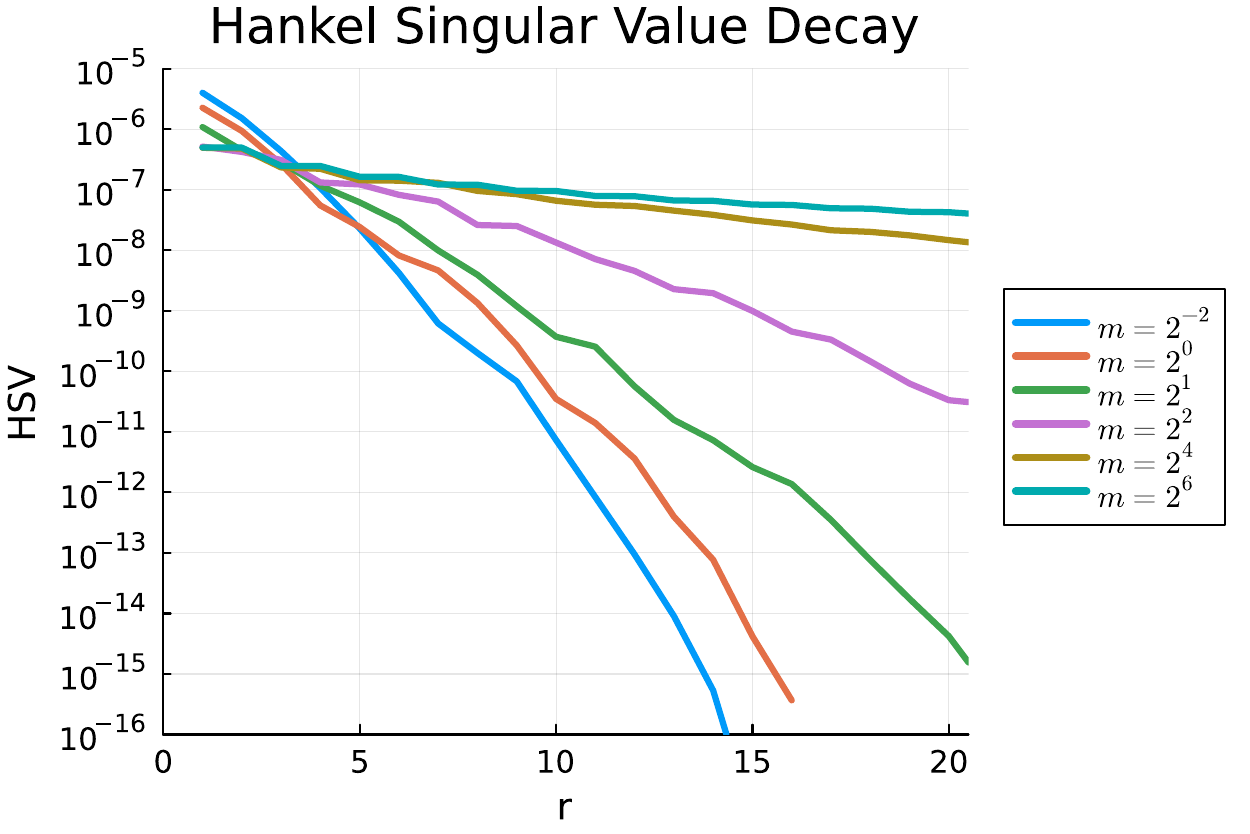}
    \caption{Slowly decaying Hankel singular values for problems with larger sectorial parameter, $m$.}
    \label{fig:sectorial}
\end{figure}
The proof of the result \eqref{eq:sectorial-estimate} is accomplished by deriving bounds for the $n$-width, and involves a construction of a partition of the imaginary axis using intervals with geometrically increasing length; this informs a numerical discretization of the frequency parameter that we exploit later, cf. \Cref{sssec:results-nnscm}.
The theory above provides strong motivation for the approach \eqref{eq:strong-greedy}. We describe next the ingredients of RBM that make the computations practical.

\subsection{Efficient evaluation of the ROM}\label{ssec:affine}
To evaluate the ROM efficiently, we make the standard RBM assumption that the LTI system arrays have affine parameter dependence,
\begin{align}
    \bs A(\bs p) &= \sum_{j=1}^{Q_A} \theta^A_j(\bs p) \bs A_j, & \bs b(\bs p) &= \sum_{j=1}^{Q_b} \theta^b_j(\bs p) \bs b_j,
    \label{affine-dep-lti}
\end{align}
  Note that under this affine assumption, the reduced order solution $\tbs w$ in \eqref{eq:transfer_function_stationary_reduced} (or the full-order model approximation $\bs{\Phi} \tbs w$) are \textit{multivariate rational} functions of $(\bs{p},\omega)$. Hence, the idealized RBM procedure \eqref{eq:strong-greedy} greedily constructs a multivariate rational approximation to the transfer function.

We additionally assume the affine coefficients are real-valued, i.e., $\theta_j^A(\bs p), \theta_j^b(\bs p)\in \R$ for all $\bs p \in \mathcal{P}$. This real-valued assumption is important for computational evaluation of an error estimate in the RBM method, as described later. General complex-valued affine dependence  can in general be expressed with real-valued coefficients by splitting up the real and imaginary parts of each coefficient. For problems that do not have such a decomposition, the empirical interpolation method is one common approach to approximate a non-affine parametric operator with an affine one \cite{ohlberger-reduced-2016}. 

Because we focus on a purely imaginary frequency variable, we write $s=i\omega$ with $\omega \in \R$, so the matrix \eqref{eq:transfer_function_stationary} also has an affine parameter decomposition with real coefficients,
\[\bs M(\bs P) = s\bs I - \bs A(\bs p) = \omega (i \bs I) - \sum_{j=1}^{Q_A} \theta^A_j(\bs p) \bs A_j.\]
Linearity implies that the reduced system arrays, $\tbs M(\bs P)$ and $\tbs b(\bs{P})$, also have affine dependence,
\begin{equation}
    \begin{alignedat}{4}
      \tbs M(\bs P)  &= \sum_{j=1}^{Q_M} \theta_j^M(\bs{P}) \tbs M_j,\quad &\tbs M_j &= \bs \Phi^* \bs M_j \bs \Phi,\quad & j &=1,2,\ldots,Q_M,\\
        \tbs b(\bs P)  &= \sum_{j=1}^{Q_b} \theta_j^b(\bs{P}) \tbs b_j,\quad &\tbs b_j &= \bs \Phi^* \bs b_j,  & j&=1,2,\ldots,Q_b,
    \end{alignedat} 
\end{equation}
which makes ``online'' computation of $\tbs w$ efficient (requiring only dense linear algebra operations on size-$r$ vectors and matrices) for any $\bs{p}$ since the affine matrices and vectors $\tbs M_i$ and $\tbs b_i$, respectively, are parameter-independent and can be precomputed.

Hence, after we have constructed $\bs \Phi$ by a linear weak-greedy reduced basis method for \eqref{eq:transfer_function_stationary}, we use $\bs \Phi$ to Galerkin project the original system, \eqref{eq:lti}, and then forward-iterate it in time. This weak greedy method can account for affine parameter dependence of the original LTI system in a relatively straightforward manner.

\subsection{Evaluation of the error}

What remains is to describe how to perform the greedy maximization \eqref{eq:strong-greedy} without knowledge of the solution $\bs{w}$ at all parameter values.
For the model \eqref{eq:linearmodel-stationary}, define the residual as
\begin{equation}
    \bs r_{\bs \Phi}(\bs P) = \bs M(\bs P) \bs e_{\bs \Phi}(\bs P) =  \bs b(\bs P) - \bs M(\bs P)\bs \Phi \tbs w(\bs P), \label{errorr}
\end{equation}
which no longer depends on the full-order solution, $\bs w(\bs P)$, and instead only the reduced order solution, which can be evaluated with $n$-independent complexity.  Inserting the residual in \Cref{errorr} into \Cref{errore} and using the Cauchy-Schwarz inequality, we have,
\begin{equation}
    \Delta_{\bs \Phi}(\bs P) \coloneqq \frac{\|\bs r_{\bs \Phi}(\bs P)\|_2}{\sigma_{\text{min}}(\bs P)} = \|\bs M(\bs P)^{-1}\|_2 \|\bs r_{\bs \Phi}(\bs P)\|_2 \geq \|\bs e_{\bs \Phi}(\bs P)\|_2, \label{upperbound-estimator}
\end{equation}
where $\sigma_{\text{min}}(\bs P)$ is the minimum singular value of $\bs M(\bs P)$ \cite{quarteroni_reduced_2016}. The quantity $\Delta_{\bs \Phi}(\bs{P})$ is an upper-bound error surrogate. Note that $\sigma_{\text{min}}(\bs P) > 0$ due to our assumed invertibility of $\bs M(\bs P)$. Under additional regularity/boundedness assumptions on $\bs{M}$, the error surrogate, $\Delta_{\bs \Phi}(\bs P)$, satisfies a norm-equivalence with the norm of the error. Define the parameter independent terms,
\begin{align*}
    \sigma_{\text{inf}} &= \inf_{\bs P \in \widetilde{\mathcal{P}}} \sigma_{\text{min}}(\bs P), &  \sigma_{\text{sup}} &= \sup_{\bs P \in \widetilde{\mathcal{P}}} \sigma_{\text{max}}(\bs P),
\end{align*}
as $\bs{P}$-uniform bounds on the extremal singular values of $\bs{M}$. Our additional uniform regularity assumption is that $0 < \sigma_\text{min}$ and $\sigma_\text{max} < \infty$, in which case a lower-bound of the norm-equivalence statement takes the form
\begin{equation}
    \frac{\sigma_{\text{inf}}}{\sigma_{\text{sup}}}\Delta_{\bs \Phi}(\bs P) \leq \frac{\|\bs r_{\bs \Phi}(\bs P)\|_2}{\sigma_{\text{max}}(\bs P)}  = \frac{\|\bs r_{\bs \Phi}(\bs P)\|_2}{\|\bs M(\bs P)\|_2}\leq \|\bs e_{\bs \Phi}(\bs P)\|_2.
    \label{lowerbound-estimator}
\end{equation}
Note that $\sigma_{\min}(\bs{P})$ and $\sigma_{\max}(\bs{P})$ are discrete analogues of the inf-sup and continuity constants for operators, respectively.

Equations \eqref{upperbound-estimator} and \eqref{lowerbound-estimator} illustrate that the error estimator $\Delta_{\bs \Phi}(\bs P)$ is $\bs{P}$-equivalent to the norm of the error, $\|\bs e_{\bs \Phi}(\bs P)\|_2$, under the uniform regularity assumption. RBM uses the greedy procedure that replaces $\|\bs{e}_{\bs \Phi}(\bs{P})\|_2$ in \eqref{eq:strong-greedy} with an upper-bound estimator $\Delta_{\bs{\Phi}}(\bs{P})$: 
\begin{align}\label{eq:rbm}
  \bs{\Phi}_k &= [ \bs w(\bs P_1), \ldots, \bs w(\bs P_k) ], & \bs{P}_{k+1} \in \argmax_{\bs{P} \in \widetilde{\mathcal{P}}} \Delta_{\bs{\Phi}}(\bs{P}).
\end{align}
This is a \textit{weak} greedy procedure under the inequalities \eqref{upperbound-estimator} and \eqref{lowerbound-estimator}. In the context of model reduction, weak greedy procedures are known to produce ROM's of comparable accuracy to strong greedy methods \cite{binev_convergence_2011,devore_greedy_2013}.

RBM (again, ideally) uses the estimator $\Delta_{\bs\Phi}(\bs{P})$ defined in \eqref{upperbound-estimator} to perform the optimization \eqref{eq:rbm}. The numerator is the residual norm, which can be stably computed using only the (inexpensive) ROM solution \cite{buhr_numerically_2014,chen-robust-2019}. However, direct computation of $\sigma_{\min}(\bs{P})$ is avoided over all $\bs{P} \in \widetilde{\mathcal{P}}$, and this quantity is instead replaced by yet another computational estimate, in particular, by a \textit{lower} bound so that the resulting error estimate remains an upper bound.

In summary, the computation of the error estimator, $\Delta_{\bs \Phi}(\bs P)$, does not require solving the full-order system, and since the error surrogate acts as an upper-bound to the error \eqref{upperbound-estimator}, the error estimator provides an inexpensive error certificate. Additionally, under the uniform regularity assumption, since the error estimate is equivalent (in the sense of norms) to the norm of the error, $\Delta_{\bs \Phi}(\bs P)$ is a useful \textit{a posteriori} ROM error estimate \cite{quarteroni_reduced_2016}. 

All the above describes the core RBM algorithm. However, to computationally estimate $\sigma_{\min}(\bs{P})$, we describe one more tool: the Successive Constraint Method.

\subsection{Estimating $\sigma_{\min}(\bs{P})$: The Successive Constraint Method}

In order to compute a lower bound for the stability factor $\sigma_\text{min}(\bs P)$, we employ the Successive Constraint Method (SCM) \cite{huynh-successive-2007}. We describe this procedure below, including some augmentations required for our core problem.

\subsubsection{Standard SCM}\label{sssec:standard-scm}
The initial SCM algorithm from \cite{huynh-successive-2007} produces lower-bound estimates of the stability factor through iteratively and greedily constrained solutions to linear programs whose evaluation entails computational complexity that is independent of the full-order dimension $n$. (Although, the ``offline'' greedy initialization of the method has $n$-dependent complexity.)  These methods typically involve reformulating the stability factor problem in its variational form,
\begin{equation}
    \label{eq:stability-variational}
    \sigma_\text{min}(\bs P)^2 = \inf_{\bs v \in \C^n\setminus \{\bs 0\}} \frac{\bs v^* \bs M(\bs P)^* \bs M(\bs P) \bs v}{\bs v^*\bs v}.
\end{equation}
SCM computes lower and upper bounds to this quantity by exploiting the parameter-affine structure of $\bs M$. Let $(\cdot)^\text{H}$ denote the Hermitian part of a matrix, and define parameter-independent Rayleigh quotients of Hermitian parts of quadratic products of the affine matrices of $\bs M$,
\begin{align*}
  R_{j,m}(\bs{v}) &\coloneqq \frac{\bs v^* (\bs M_j^* \bs M_m)^\text{H} \bs v}{\bs v^*\bs v}.
\end{align*}
Then by expanding \eqref{eq:stability-variational} using the parameter-affine decomposition of $\bs M$, we have,
\begin{equation}
    \label{eq:stability-variational-affine}
    \begin{aligned}
      \sigma_\text{min}(\bs P)^2 &= \inf_{\bs v \in \C^n\setminus \{\bs 0\}} \sum_{j=1}^{Q_M} \sum_{m=j}^{Q_M} (2 - \delta_{j,m}) \theta_j^M(\bs P) \theta_m^M(\bs P) R_{j,m}(\bs{v}) \\ 
        &=: \inf_{\bs v \in \C^n \setminus \{\bs 0\}}\mathcal{J}\left(\bs P, \left(R_{j,m}(\bs{v})\right)_{1\leq j\leq m\leq Q_M}\right).
    \end{aligned}
\end{equation}
For notational brevity, we write,
\begin{align}
  \bs y(\bs v) &= [R_{1,1}(\bs{v}),\ldots,R_{j,m}(\bs v)], & \sigma_{\min}(\bs{P})^2 &= \inf_{\bs{v} \in \C^n\backslash\{\bs{0}\}} \mathcal{J}\left(\bs{P}, \bs{y}(\bs{v}) \right).
\end{align}
SCM proceeds iteratively as follows: suppose that a size-$k$ set $\mathcal{C}_k \subset \widetilde{\mathcal{P}}$ has been determined, and that for every parameter in this set the (exact) stability factor and its corresponding extremal singular vector have been identified:
\begin{align*}
  \mathcal{C}_k &= \left\{\bs{P}_j \right\}_{j \in [k]}, & 
  \sigma_{\min}(\bs{P}_j)^2 &= \frac{\|\bs{M}(\bs{P}_j) \bs{v}(\bs{P}_j) \|^2_2}{\|\bs{v}(\bs{P}_j)\|_2^2}.
\end{align*}
Then an upper bound for the stability factor can be identified by restricting the feasible set of values of $\bs{v}$,
\begin{equation}
    \label{eq:stability-ub}
    \sigma_\text{min}^\text{UB}(\bs P, \mathcal{C}_k)^2 \coloneqq \min_{\bs P_\ell \in \mathcal{C}_k} \mathcal{J}\left(\bs P, \bs y(\bs{v}(\bs P_\ell))\right) \geq \sigma_\text{min}(\bs P)^2.
\end{equation}
To obtain a lower bound, we note that, for any nontrivial $\bs{v}$, we have $R_{j,m}(\bs{v}) \in \left[\lambda_{\mathrm{min}}(\bs{M}_j^\ast \bs{M}_m)^\text{H}, \lambda_{\mathrm{max}}(\bs{M}_j^\ast \bs{M}_m)^\text{H}\right]$. Therefore, a lower bound for $\sigma_{\min}$ can be formed by relaxing the feasible set for $\bs{y}$ and allowing it to range over all possible allowable values:
\begin{equation}
    \label{eq:stability-lb}
    \begin{aligned}
        \widetilde{\sigma}_\text{min}^\text{LB}(\bs P, \mathcal{C}_k)^2 = \min \bigg\{& \mathcal{J}(\bs P, \bs y) \;\bigg|\\
        &\lambda_\text{min}((\bs M_j^* \bs M_m)^\text{H}) \leq y_{j,m} \leq \lambda_\text{max}(\bs M_j^* \bs M_m)^\text{H},\\
        &\mathcal{J}(\bs P_\ell, \bs y) \geq \sigma_\text{min}(\bs P_\ell)^2 \quad \forall \; \bs P_\ell \in \mathcal{N}(\bs P) \subset \mathcal{C}_k\bigg\}.
    \end{aligned}
\end{equation}
Note the slight abuse of notation that $\widetilde{\sigma}_\text{min}^\text{LB}(\bs P, \mathcal{C}_k)^2$ represents a lower-bound approximant to a squared minimum singular value, however, the quantity need not be positive before performing sufficient constraining. We will still write this quantity as a square to keep consistent with the fact that it is a lower-bound approximant to a positive, squared quantity.
Here, $\mathcal{N}(\bs P)$ is a neighborhood of $\bs{P}$, cf. \Cref{ssec:parameter-discretization}. In practice the parameter set $\widetilde{\mathcal{P}}$ is discretized so that in this case $\mathcal{N}(\bs P)$ is typically chosen to be the $\text{min}(k,M_\alpha)$ Euclidean nearest neighbors to $\bs P$ in $\mathcal{C}_k$, where $M_\alpha$ is a hyperparameter to set. Note that, if the extremal eigenvalues of $(\bs{M}_j^\ast \bs{M}_m)^H$ are precomputed, then the lower bound minimization problem above is independent of the FOM dimension $n$. From this quantity, the lower-bound approximant to the minimal singular value is given by
\begin{align*}
\sigma_\text{min}^\text{LB}(\bs P, \mathcal{C}_k) = \sqrt{\max(0,\widetilde{\sigma}_\text{min}^\text{LB}(\bs P, \mathcal{C}_k)^2)}.
\end{align*}

With all of these pieces together, the SCM relative error is defined as the spread of the normalized upper- and lower-bound intervals,
\begin{equation}
    \label{eq:scm-relative-gap}
    \epsilon(\bs P, \mathcal{C}_k) = \frac{\sigma_\text{min}^\text{UB}(\bs P, \mathcal{C}_k)^2 - \widetilde{\sigma}_\text{min}^\text{LB}(\bs P, \mathcal{C}_k)^2}{\sigma_\text{min}^\text{UB}(\bs P, \mathcal{C}_k)^2} \geq 0,
\end{equation}
and SCM proceeds inductively,
\begin{align}\label{eq:scm-induction}
  \epsilon_{k} &= \max_{\bs P \in \widetilde{\mathcal{P}}} \epsilon(\bs P, \mathcal{C}_k), & \bs P_{k+1} &= \argmax_{\bs P \in \widetilde{\mathcal{P}}} \epsilon(\bs P, \mathcal{C}_k), & \mathcal{C}_{k+1} &= \mathcal{C}_k \cup \{\bs P_{k+1}\},
\end{align}
and terminates when $\epsilon_k$ is below a desired threshold.

\subsubsection{Natural-norm SCM}\label{sssec:nnscm}
The standard SCM method may struggle when the cross-terms, $(\bs M_j^* \bs M_m)^\text{H}$ for $j\neq m$, have both positive and negative spectrum both of large magnitude. (In this case the relaxed $\bs{y}$ variables in \eqref{eq:stability-lb} make $\mathcal{J}$ poorly represent the Rayleigh quotient of the original operator.)
This situation arises fairly commonly in practice (e.g., in many of the examples we consider in this manuscript). As a result, $\epsilon$ remains around or above the value of 1 for much of the procedure.

The natural norm successive constraint method (NNSCM) in \cite{chen_certified_2016} improves the traditional SCM approach.
Recall the variational form \eqref{eq:stability-variational}; in NNSCM, this quantity is ``linearized'' about a fixed parameter, $\obs P$, and normalized with respect to the operator natural norm at this parameter, introducing the new quantity
\begin{equation}
    \label{eq:nnscm-beta}
    \beta(\bs P, \obs P) = \inf_{\bs v \in \C \setminus \{\bs 0\}} \frac{\bs v^*(\bs M(\obs P)^* \bs M(\bs P))^\text{H}\bs v}{\bs v^* \bs M(\obs P)^*\bs M(\obs P) \bs v},
\end{equation}
which implies that $\beta(\obs P,\obs P) = 1$, and if $\bs P \mapsto \bs{M}(\bs{P})$ is continuous, then $\beta(\bs P,\obs P) \geq 0$ for $\bs P$ ``near'' $\obs P$. The solution to the optimization problem \eqref{eq:nnscm-beta} is given by a generalized eigenvalue problem, so that there is a nontrivial infimum-achieving vector $\bs{v}(\bs P,\obs P)$ satisfying,
\begin{align}\label{eq:beta-geneig-problem}
  \beta(\bs P,\obs P) = \frac{\bs v(\bs P,\obs P)^*(\bs M(\obs P)^* \bs M(\bs P))^\text{H}\bs v(\bs P,\obs P)}{\left\|\bs{M}(\obs P) \bs{v}(\bs P,\obs P)\right\|_2^2}.
\end{align}
The main utility of $\beta$ is that it satisfies the inequality
\begin{equation}
    \beta(\bs P,\obs P) \sigma_\text{min}(\obs P) \leq \sigma_\text{min}(\bs P),
\end{equation}
i.e., $\beta$ provides a quantitative way to generate a lower bound for $\sigma_{\text{min}}(\bs{P})$. Hence, in NNSCM we compute lower-bound predictions to $\beta$ via linear programs and use this to compute a lower bound for $\sigma_\text{min}(\bs P)$.

To construct a linear program for $\beta$ in \eqref{eq:nnscm-beta}, we exploit the $\bs{P}$-affine dependence of $\bs{M}$, similar to standard SCM:
\begin{equation}
    \label{eq:nnscm-beta-affine}
    \begin{aligned}
        \beta(\bs P, \obs P) &= \inf_{\bs v \in \C \setminus \{\bs 0\}} \sum_{j=1}^{Q_M} \theta_j^M(\bs P) \frac{\bs v^*(\bs M(\obs P)^* \bs M_j)^\text{H}\bs v}{\bs v^* \bs M(\obs P)\bs M(\obs P) \bs v}\\
        &=: \inf_{\bs v \in \C \setminus \{\bs 0\}}\mathcal{J}_\text{NN}\left(\bs P, \bs{z}(\bs{v}, \obs P) \right), 
    \end{aligned}
\end{equation}
where 
\begin{equation}
    \bs z(\bs{v},\obs P) =  \left(\frac{\bs v^*(\bs M(\obs P)^* \bs M_j)^\text{H}\bs v}{\bs v^* \bs M(\obs P)\bs M(\obs P) \bs v}\right)_{1\leq j\leq Q_M}.
\end{equation}
Next, we introduce a $\obs P$-dependent finite parameter set $\mathcal{C}_k(\obs P)$ to approximate upper and lower-bound predictions to $\beta$. Like $\mathcal{C}_k$ for standard SCM, the set $\mathcal{C}_k(\obs P)$ corresponds to a greedily constructed parameter set. At each $\bs p_\ell \in \mathcal{C}_k(\obs p)$, we identify a corresponding $\bs v(\bs P_\ell, \obs P)$, defined through the generalized eigenvalue solution \eqref{eq:beta-geneig-problem}.
An upper bound for $\beta$ is given similarly to standard SCM by restricting the feasible set,
\begin{equation}
    \label{eq:nnscm-beta-ub}
        \beta^\text{UB}(\bs P, \obs P, \mathcal{C}_k(\obs p)) = \min_{\bs P_\ell \in \mathcal{C}_k(\obs P)} \mathcal{J}_\text{NN}(\bs P, \bs z(\bs{v}(\bs P_\ell, \obs P),\obs P)).
\end{equation}
Similarly, a lower bound $\beta(\bs P, \obs P)$ is identified through relaxing the feasible values of $\bs{z}$,
\begin{equation}
    \label{eq:nnscm-beta-lb}
    \begin{aligned}
        \beta^\text{LB}(\bs P, \obs P, \mathcal{C}_k(\obs P)) = \min \bigg\{& \mathcal{J}_\text{NN}(\bs P, \bs z) \;\bigg|\\
        &\frac{-\sigma_\text{max}(\bs M_j)}{\sigma_\text{min}(\bs M(\obs P))} \leq z_{j} \leq \frac{\sigma_\text{max}(\bs M_j)}{\sigma_\text{min}(\bs M(\obs P))},\\
        &\mathcal{J}_\text{NN}(\bs P_\ell, \bs z) \geq \beta(\bs P_\ell,\obs P) \; \forall \; \bs P_\ell \in \mathcal{N}(\bs P) \subset \mathcal{C}_k(\obs P)\bigg\},
    \end{aligned}
\end{equation}
where the relaxed bounding constraints on $z_j$ follow from 
\begin{align*}
    \left|\frac{\bs v^*(\bs M(\obs P)^* \bs M_j)^\text{H}\bs v}{\bs v^* \bs M(\obs P)\bs M(\obs P) \bs v}\right| &= \frac{\left|\bs v^* (\bs M(\obs P)^* \bs M_j)^\text{H}\bs v\right|}{\|\bs M(\obs P)\bs v\| \|\bs v\|} \frac{\|\bs v\|}{\|\bs M(\obs P)\bs v\|}\\
    &\leq \frac{\|\bs M_j \bs v\|}{\|\bs v\|}\frac{\|\bs v\|}{\|\bs M(\obs P)\bs v\|}\\
    &\leq \frac{\sigma_\text{max}(\bs M_j)}{\sigma_\text{min}(\bs M(\obs P))}.
\end{align*}
Again, $\mathcal{N}(\bs P)$ is chosen to be the $\text{min}(k,M_\alpha)$ Euclidean nearest neighbors to $\bs P$ in $\mathcal{C}_k(\obs P)$.

Similar to standard SCM, fixing $\obs P$ and $\mathcal{C}_k(\obs P)$, one computes the NNSCM relative gap as
\begin{equation}
    \epsilon_\beta(\bs P, \obs P, \mathcal{C}(\obs P)) = \frac{\beta^\text{UB}(\bs P, \obs P, \mathcal{C}_k(\obs P)) - \beta^\text{LB}(\bs P, \obs P, \mathcal{C}_k(\obs P))}{\beta^\text{UB}(\bs P, \obs P, \mathcal{C}_k(\obs P))}.
\end{equation}
Note that the upper-bound prediction need not be positive, so the relative gap may be negative for $\bs P \neq \obs P$. However, importantly, for $\bs P$ ``near'' $\obs P$, this quantity can be positive and is then used for further enriching $\mathcal{C}_k(\obs P)$.

Once one has repeated the above process for a set $\mathcal{C}_k = \{\obs P_\ell\}_{\ell=1}^k$, one can compute an upper-bound prediction to $\sigma_\text{min}(\bs M(\bs P))$ by \eqref{eq:stability-ub} and a lower-bound prediction as
\begin{equation}
    \label{eq:nnscm-sigma-lb}
    \sigma^\text{LB}_\text{min}(\bs P,\mathcal{C}_k) = \max_{\obs P_\ell \in \mathcal{C}_k} \beta^\text{LB}(\bs P, \obs P_\ell, \mathcal{C}_k(\obs P_\ell)) \sigma_\text{min}(\obs P_\ell),
\end{equation}
from which one can compute a relative gap as in \eqref{eq:scm-relative-gap}. The full NNSCM algorithm is given in \Cref{alg:nnscm}, \Cref{sec:nnscm-alg}.

\subsection{Parameter discretization}\label{ssec:parameter-discretization}
The optimizations in \Cref{eq:rbm,eq:stability-lb,eq:scm-induction,eq:nnscm-beta-lb} all involve objective functions or constraints evaluated over a set of parameter values $\bs{P}$. In computational practice, these optimizations are performed over a discretization of parameter space. We will denote $\Pi \subset \widetilde{\mathcal{P}}$ as this discrete parameter grid. For implementation (including the full NNSCM description in \Cref{alg:nnscm}) we replace all these continuous optimizations with discrete ones using the parameter set $\Pi$.

\section{Numerical Examples}

\label{sec:numerics}

We consider several parameterized examples below including a parametric Penzl model and semi-discretizations to various parabolic PDEs including a time-fractional heat equation. All examples are implemented with the open-source Julia software package \texttt{ModelOrderReductionToolkit.jl} \cite{mortjl} using the Gurobi solver for linear programming \cite{gurobi} and ARPACK for sparse eigenvalue computations \cite{arpack}. 

\subsection{Computational Details}

For all problems, we restrict to the single-input, single-output case, $p=q=1$ for simplicity. Thus, for the remainder of this section, we neglect the indexing parameter, $k$, and denote $\widetilde{\mathcal{P}} = i\R \times \mathcal{P}$.

\subsubsection{Full order models}
For the PDE problems, we consider the domain $\Omega = (-1,1)^2$ and impose homogeneous Dirichlet boundary conditions. 
Note that for the PDE models, we use $u$ to denote the continuous state variable and $f$ to denote the input while after discretization these are denoted as $\bs x$ and $\bs u$ respectively.
Discretization is performed by centered finite differences with a tensor product grid of $\sqrt{n}$ uniformly-spaced nodes on the interior of $[-1,1]$ for a state size of $n$ in the resulting system. The input is forcing applied on the exterior of the circle centered at the origin with radius $r=0.5$, i.e., applied to discrete nodes $(x_j,y_k)$ such that $x_j^2 + y_k^2 > 0.5^2$. The output is the discrete approximation to the average value of the state, i.e., the integral of the state over the uniform probability measure on $\Omega$. When reporting weak greedy error decay for problems posed on $\Omega$, instead of reporting vector $\ell^2$ errors which change with $n$, we report discrete $L^2$ errors which are the discrete approximation of the $L^2(\Omega)$ norm over the Lebesgue measure of the state. (This effectively means we report standard $\ell^2$ errors over $n$-vectors normalized by $4/\sqrt{n}$.)

\subsubsection{SCM and NNSCM details}\label{sssec:results-nnscm}
For the simplest, symmetric example, the standard SCM algorithm described in \Cref{sssec:standard-scm} is used. Otherwise, a version of NNSCM, slightly modified from what is described in \Cref{sssec:nnscm}, is applied to each problem to solve for lower-bound approximations of the stability factors, $\sigma^\text{LB}_\text{min}(\bs P) \leq \sigma_\text{min}(\bs P)$. The modification is based on some of the theoretical motivation described in \Cref{ssec:theory}, specifically the discussion of the estimate \eqref{eq:sectorial-estimate}: NNSCM tended to struggle when approximating over the entire frequency-domain, especially since $\omega$ can take values over all $\R$. In particular, the standard NNSCM procedure, with a value of $\obs{\bs{P}}$ fixed, resulted in the procedure attempting to enrich with parameter values corresponding to very distant frequency values, which did not help with constructing a meaningful lower bound. To combat this, we manually decomposed the parameter set $\widetilde{\mathcal{P}}$ by splitting across frequency values $\{\overline{\omega}_j\}_{j=0}^M$, forming parameter subdomains,
\begin{align}\label{eq:omega-discretization}
    \widetilde{\mathcal{P}}_j &= i[\overline{\omega}_{j-1}, \overline{\omega}_{j}) \times \mathcal{P}, & j &\in [M].
\end{align}
With this partition, $M$ separate NNSCMs are trained on each parameter subdomain $\widetilde{\mathcal{P}}_j$. Within each subdomain, the frequency values are discretized by five uniformly spaced values. In all examples below, we take $\overline{\omega}_0 = 0$, $\overline{\omega}_1 = 10^{-2}$, and $\overline{\omega}_M=10^3$. Unless otherwise stated, all other $\overline{\omega}$ are logarithmically spaced between $\overline{\omega}_1$ and $\overline{\omega}_M$. This logarithmic spacing is motivated by the discussion in \Cref{ssec:theory} regarding the geometrically increasing lengths along $i\R$. All standard SCM algorithms are trained with nearest neighbor parameter $M_\alpha=20$, and all NNSCM-type algorithms are trained with $M_\alpha=20$, $\texttt{inside}=\texttt{true}$, and $\varphi=0$. (See \Cref{alg:nnscm} for an explanation of these algorithmic hyperparameters.)

After the SCM or NNSCM is trained, a potentially separate parameter grid is used for greedily building the reduced basis, which we call $\widetilde{\Pi}$.

\subsubsection{Real and complex reduced dimensions}
Even in the specialized case that the original parametric LTI system \eqref{eq:lti} is real-valued (e.g., the initial condition and all system matrices are real-valued), the proposed method results in a generally complex-valued reduced basis and hence a complex-valued reduced system \eqref{eq:lti_red}. Although this is not an issue mathematically, we discuss below one way to reformulate the procedure, amounting to a frequency-domain post-processing, so that the reduced system \eqref{eq:lti_red} is real-valued.

Given a (generally complex-valued) $\bs{\Phi} \in \C^{n \times r_0}$ that is determined through the previously described RBM procedure \eqref{eq:rbm}, one can ensure a real-valued reduced system by performing proper orthogonal decomposition on the real and imaginary components of the reduced basis, $\tbs \Phi = \left[\text{Re}(\bs \Phi) \;\; \text{Im}(\bs \Phi)\right] \in \R^{n \times 2r_0}$. Intuitively, for a general dimension-$r_0$ complex reduced basis, a real basis of dimension-$2r_0$ will be required to encapsulate the same information. However, we find that $\tbs \Phi$ is often numerically near-rank-deficient, so that choosing real rank $r < 2 r_0$ can be effective in practice. 

We will describe results as a function of $(r_0, r)$, which means that we run the RBM procedure \eqref{eq:rbm} to identify a rank-$r_0$ complex-valued reduced model, but numerically use a rank-$r$ real-valued reduced model using the POD truncation strategy described in the previous paragraph, with $r \leq 2 r_0$. Precisely, given a complex reduced-basis matrix $\bs\Phi_c \in \C^{n\times r_0}$, we first form the real SVD of the real and imaginary parts
\[\bs U \bs \Sigma \bs V^T = [\text{Re}(\bs\Phi_c)\;\;\text{Im}(\bs\Phi_c)],\]
and choose a POD energy truncation of $10^{-2}$, selecting,
\begin{align*}
    r &= \min\left\{k\in[2r_0]\;\bigg|\;\sqrt{\frac{\sum_{j=k+1}^n{\Sigma_{j,j}}^2}{\sum_{j=1}^n{\Sigma_{j,j}}^2}} \leq 10^{-2}\right\}, & \bs\Phi = \bs U_{:,1:r} \in \R^{n\times r}.
\end{align*}
The above ensures that $\bs{\Phi}$ is a rank-$r$ real-valued matrix. 
The truncation value $10^{-2}$ is not particularly special, it is chosen to balance projection error with model truncation.

\subsection{Symmetric Example}\label{ssec:symmetric}

The first example we consider is the parametric symmetric parabolic problem found in \cite{chen_certified_2016} given by
\[
    \begin{aligned}
        u_t &= u_{xx} + p_1 u_{yy} + p_2 u + f(x), & x \in \Omega,\\
        p_1 &\in [0.1,4],\;p_2 \in [0,2]. &
    \end{aligned}
\]
The frequency parameter is discretized into 50 logarithmically spaced values between $10^{-2}$ and $10^3$. 
The remaining parameters, $p_1$ and $p_2$, are discretized uniformly with 20 values each between their respective minimum and maximum values for a total of $50\times 20^2 = 2\cdot 10^4$ parameter values. The standard SCM algorithm is used and is trained to a relative gap of $\epsilon=0.8$. This parameter discretization is used both for training the SCM and for greedy reduced basis construction.

The results in \Cref{fig:symmetric} demonstrate consistent exponential error decay in the complex reduced basis dimension. Fixing a complex reduced basis dimension $r_0=10$, a real reduced basis is constructed of order $r=13$. We observe from the Bode plots that the transfer function is approximated to at-worst order $10^{-2}$ relative error. Perhaps unsurprisingly, all $r_0=10$ parameter samples occur at $p_2=2$ corresponding to when the growth term, $p_2 u$, is maximized. 

\begin{figure}[ht!]
    \centering
    \includegraphics[width=0.325\textwidth]{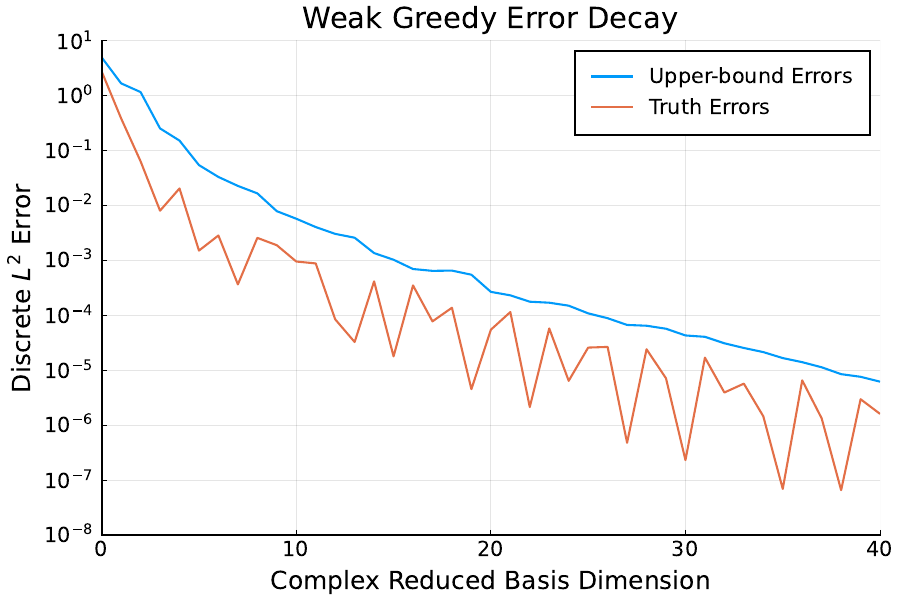}
    \includegraphics[width=0.325\textwidth]{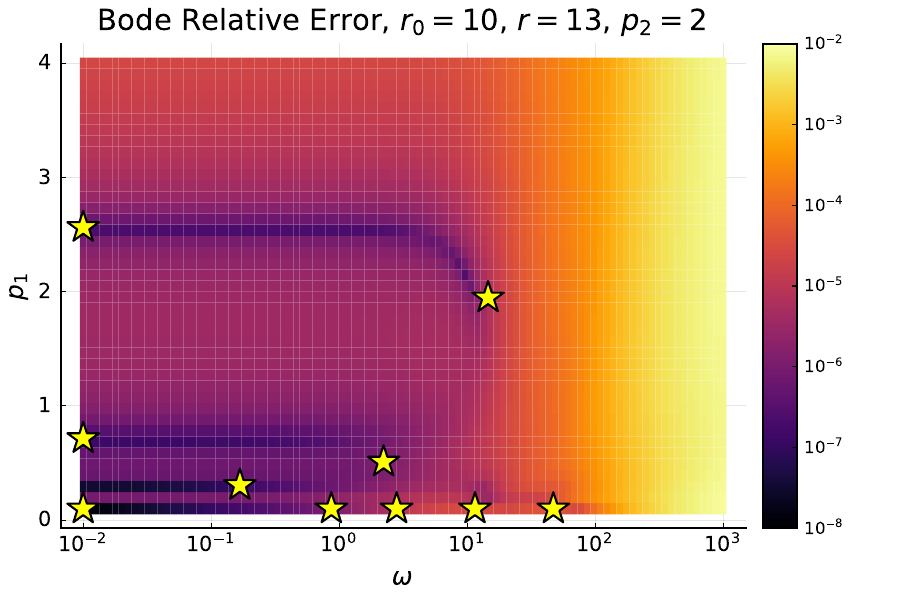}
    \includegraphics[width=0.325\textwidth]{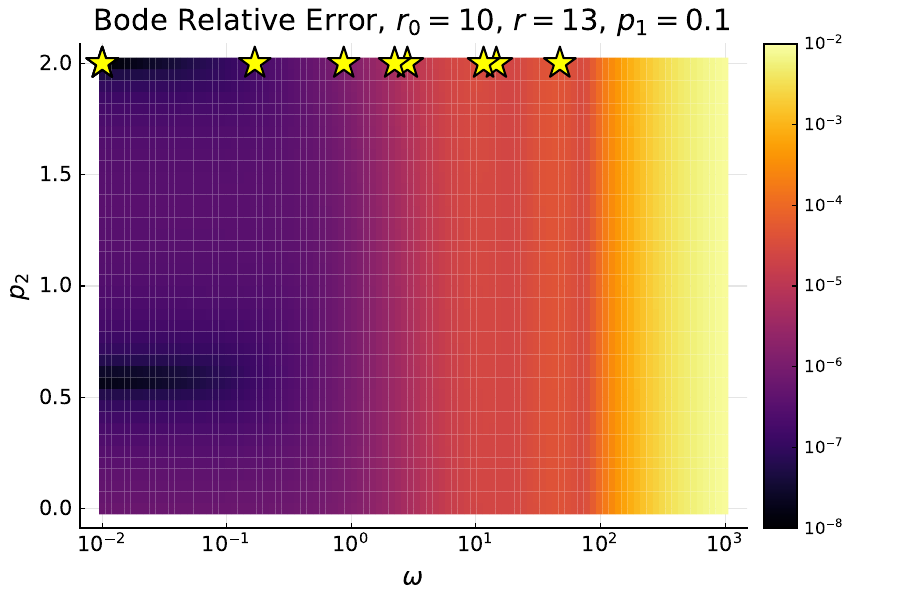}
    \caption{Symmetric example of \cref{ssec:symmetric}, $n=100^2$. Left: Upper bound errors refer to $\max_{\bs{P}} \Delta_{\bs\Phi}(\bs{P})\cdot4/\sqrt{n}$, and truth errors are $\max_{\bs{P}} \|e_{\bs{\Phi}}(\bs{P})\|_2\cdot4/\sqrt{n}$. Middle: Fixed-$p_2$ relative error in the transfer function (Bode plot); yellow stars indicate parameter values corresponding to snapshots chosen by RBM. Right: Fixed-$p_1$ relative error.}
    \label{fig:symmetric}
\end{figure}

\subsection{Parameterized Penzl Example}\label{ssec:ppenzl}
The second example we consider is a three-parameter Penzl model found in \cite{penzl_algorithms_2006,ionita_data-driven_2014} which is a discrete LTI toy-model with parameter-dependent matrices given by
\begin{align*}
    &\begin{aligned} \bs A =
        \text{diag}\bigg(&\begin{pmatrix}
            -1 & 100 + p_1\\
            -100 - p_1 & -1\\
        \end{pmatrix}, \begin{pmatrix}
            -1 & 200 + p_2\\
            -200 - p_2 & -1\\
        \end{pmatrix}, \\
        &\begin{pmatrix}
            -1 & 400 + p_3\\
            -400 - p_3 & -1\\
        \end{pmatrix},\text{diag}(-1,-2,\ldots,-1000)\bigg) \in \R^{1006\times1006},
    \end{aligned} \\
    & \bs B = (10,10,10,10,10,10,1,1,\ldots,1)^T \in \R^{1006\times 1},\\
    & \bs C = \bs B^T \in \R^{1 \times 1006},
\end{align*}
where $p_1,p_2,p_3 \in [-20,20]$. The parameter set is decomposed into $M=19$ frequency ranges, but the values of $\overline{\omega}_j$ are manually chosen as:
\begin{align*}
    \{\overline{\omega}_j\}_{j=0}^M = \{ &0,10^{-2},1,50,80,100,120,150,180,200,220, \\
    &250,300,350,380,400,420,450,500,10^3\}.
\end{align*}
This manual choice is made in order to capture the excited Penzl frequency modes at $\omega=100,200,400$. The remaining parameters are discretized into 9 uniformly spaced points between $-20$ and $20$ inclusive.
The NNSCMs are trained with tolerances $\epsilon=0.6$ and $\epsilon_\beta=0.99$. After the NNSCMs are trained, the greedy reduced basis is constructed via discretization of the frequency parameter with fifty logarithmically spaced points between $10^{-2}$ and $10^3$, and the other parameters as previously described.

The results are displayed in \Cref{fig:penzl}. We observe that this model observes a fast exponential rate of decay in the weak greedy error. Additionally, the lower-bound and truth stability factors are plotted at two parameter values, the latter of which is off the parameter grid. We observe relatively tight lower-bound predictions in each. A complex reduced basis dimension $r_0=15$ is chosen resulting in a real reduced basis dimension of $r=20$. For this dimension ROM, the bode plots display at-worst order $10^{-2}$ relative error in the transfer function. The relative error is lowest near the excited frequencies, $100+p_1$, $200+p_2$, and $400+p_3$.

\begin{figure}[ht!]
    \centering
    \includegraphics[width=0.325\textwidth]{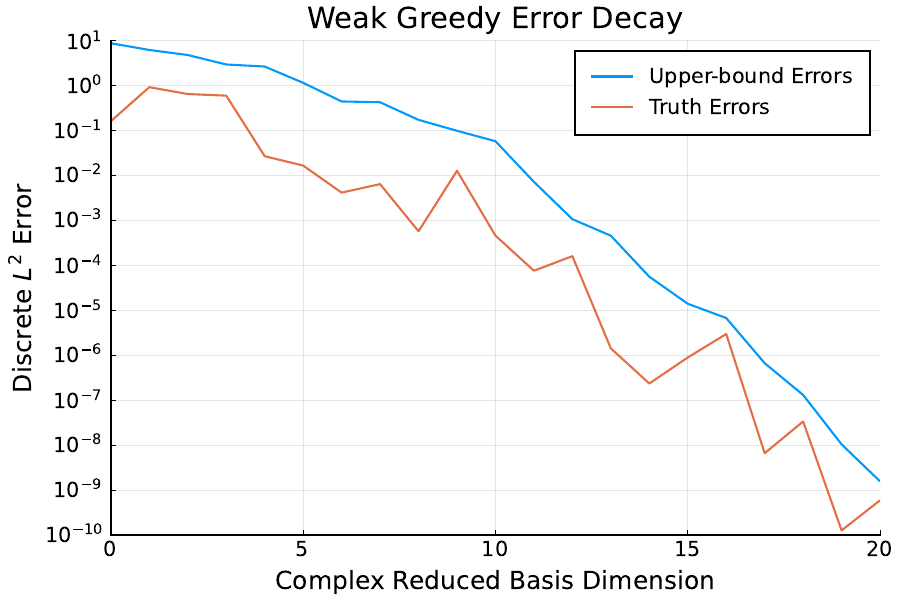}
    \includegraphics[width=0.325\textwidth]{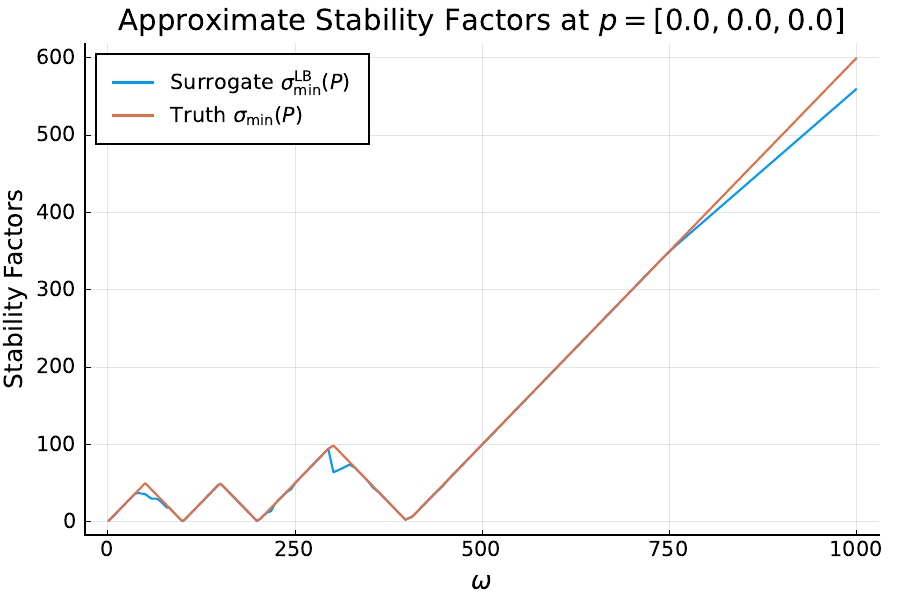}
    \includegraphics[width=0.325\textwidth]{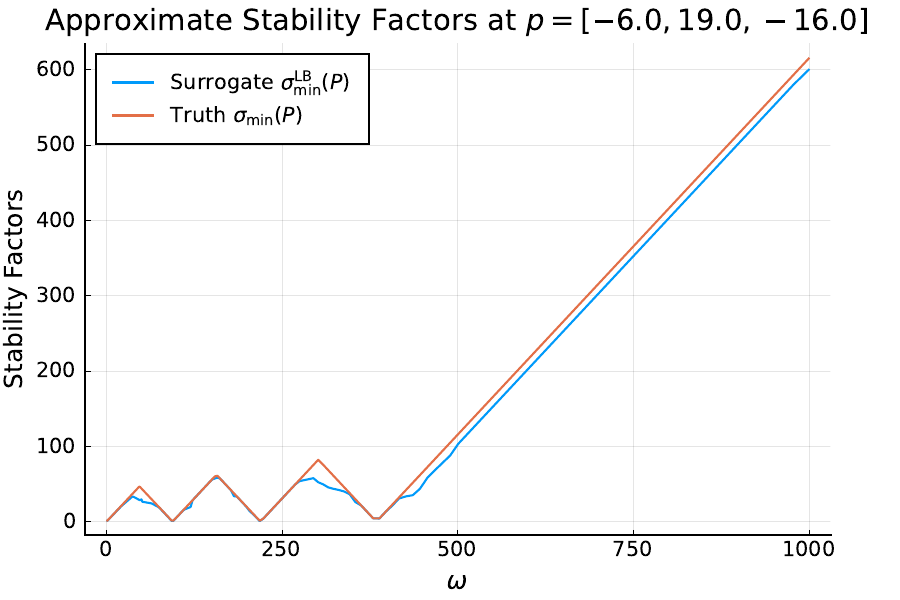}
    \includegraphics[width=0.325\textwidth]{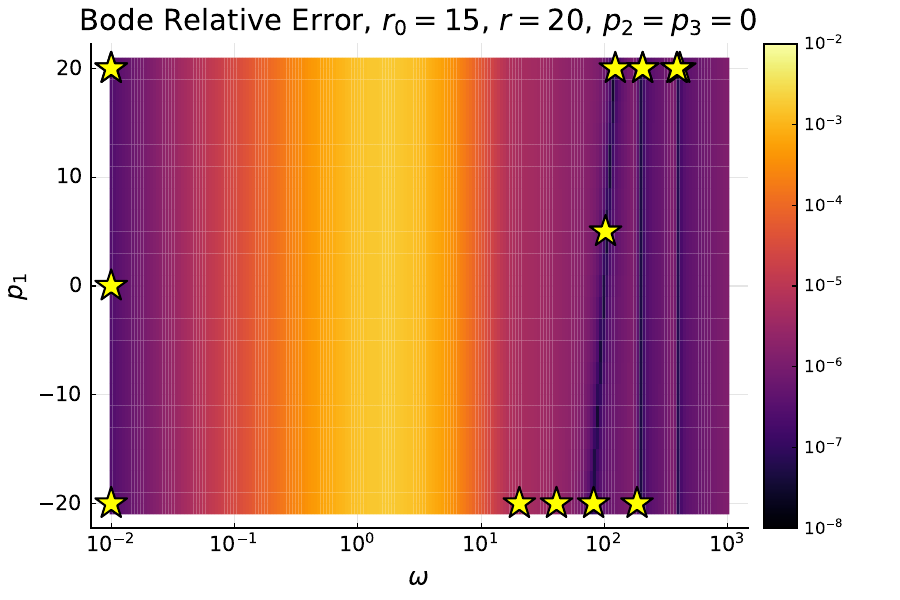}
    \includegraphics[width=0.325\textwidth]{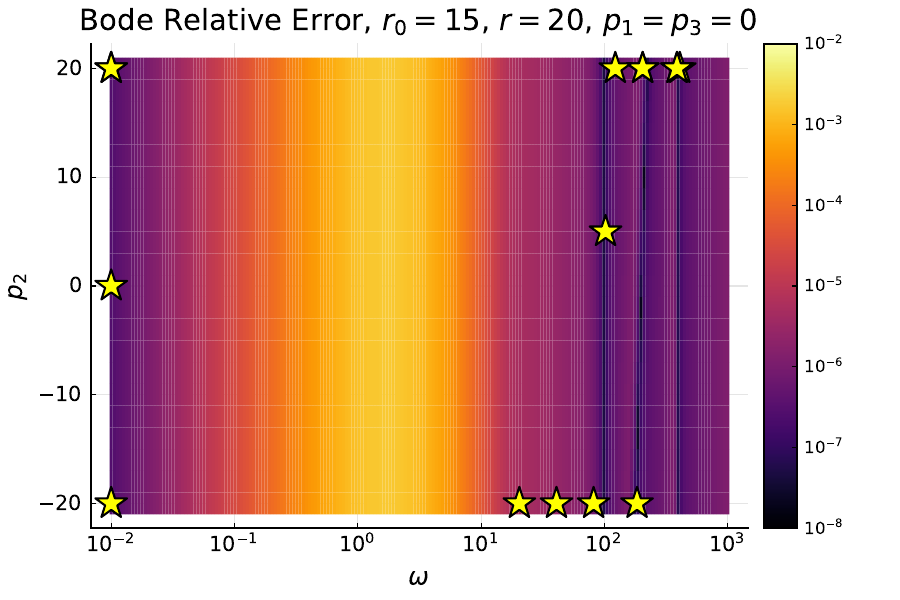}
    \includegraphics[width=0.325\textwidth]{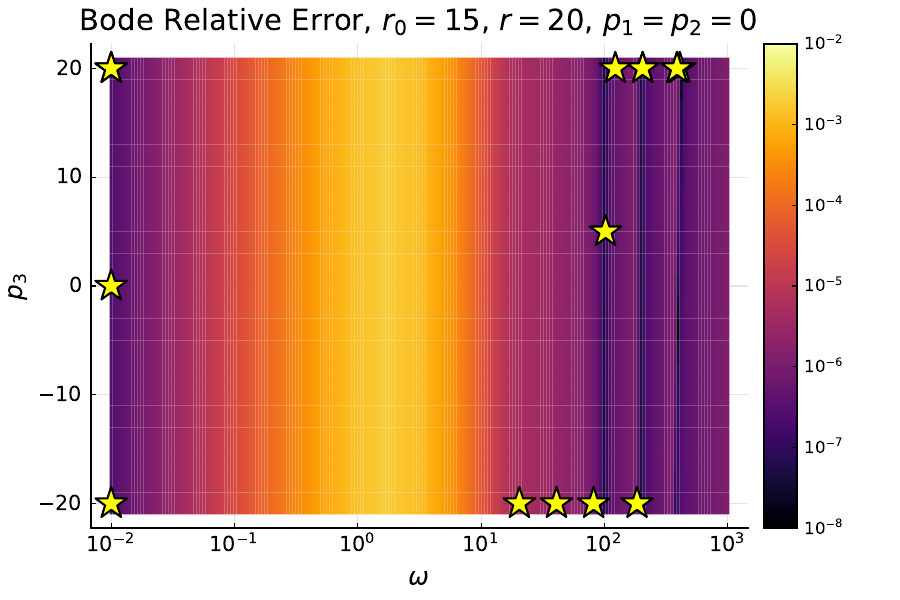}
    \caption{Parametric Penzl example of \cref{ssec:ppenzl}, $n=1006$. Top row: 
    (left) Upper bound errors refer to $\max_{\bs{P}} \Delta_{\bs\Phi}(\bs{P})$, and truth errors are $\max_{\bs{P}} \|e_{\bs{\Phi}}(\bs{P})\|_2$. (middle, right): NNSCM-computed stability factors $\sigma_{\min}^{\text{LB}}$ versus actual value $\sigma_{\min}$ at different values of $(p_1, p_2, p_3)$. Bottom row: Relative error in the transfer function (Bode plot) for various 2D slices of $(\omega, p_1, p_2, p_3)$; yellow stars indicate parameter values corresponding to snapshots chosen by RBM.}
    \label{fig:penzl}
\end{figure}

\subsection{Vanishing Diffusion Example}\label{ssec:vandiff}

The third example we consider is the vanishing diffusion problem found in \cite{chen_certified_2016} given by
\[
    \begin{aligned}
        u_t &= (1 + p_1 x) u_{xx} + (1 + p_2 y) u_{yy} + f(x), & x \in \Omega,\\
        p_1,p_2 &\in [-0.99,0.99]. &
    \end{aligned}
\]
The parameter set is decomposed into $M=6$ frequency ranges determined by $\{\overline{\omega}_j\}_{j=0}^M = \{0,10^{-2},10^{-1},10^{0},10^{1},10^{2},10^{3}\}$. The remaining parameters are discretized with $10$ uniformly spaced values between $-0.99$ and $0.99$ inclusive. The NNSCMs are trained with tolerances $\epsilon=0.8$ and $\epsilon_\beta=0.9999$. The frequency parameter discretization for construction of the greedy reduced-basis is fifty logarithmically spaced points between $10^{-2}$ and $10^{3}$.

The results are displayed in \Cref{fig:vanishingdiffusion}. We observe that this model exhibits slower weak greedy error decay than the previous two problems. The next two panels illustrate the approximate stability surfaces at $\omega=10^{-3}$ and at $\omega=100$, with the surfaces colored by where NNSCM performed domain decomposition. We note that at small frequency values, the true stability surface looks like a paraboloid which is maximized at $p_1=p_2=0$ and decays towards zero as $|p_1|\rightarrow1$ or $|p_2|\rightarrow1$; the NNSCM represented this surface with five domain decompositions forming an almost piecewise linear approximant. At larger frequency values, the stability surface is flatter. We observe that at $\omega=100$, NNSCM took more iterations to converge to a relative gap of $\epsilon=0.8$. Finally, a complex reduced basis of dimension $r_0=10$ is resulting in a real reduced basis of dimension $r=10$. These two being the same is due to the fact that all greedy parameters chosen for full-order solves are at a low frequency value of $10^{-2}$, resulting in mostly-real frequency-domain solutions. We observe that the dimension $r=10$ ROM has a maximum output transfer function relative error of $10\%$ with smaller relative error nearer the sampled parameter values.

\begin{figure}[ht!]
    \centering
    \includegraphics[width=0.325\textwidth]{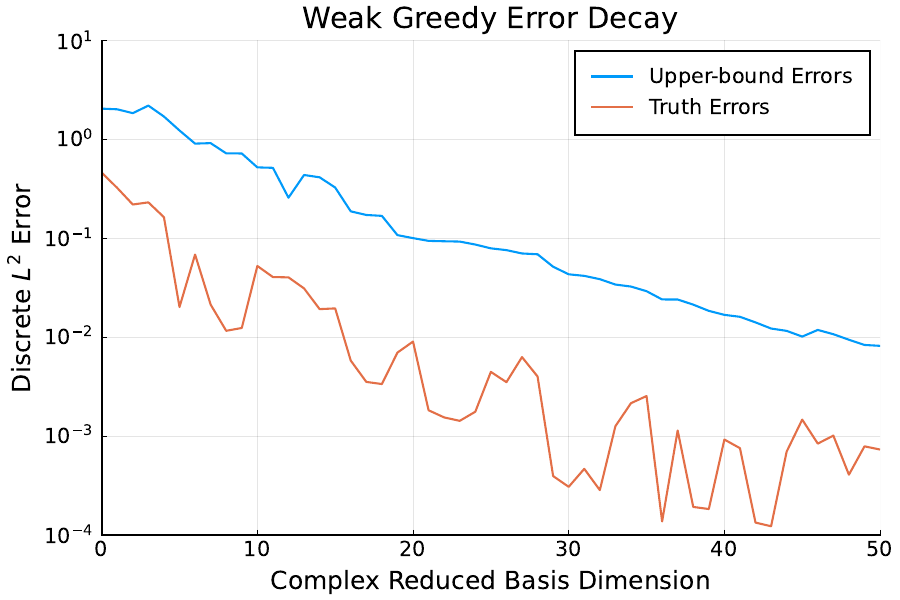}
    \includegraphics[width=0.325\textwidth]{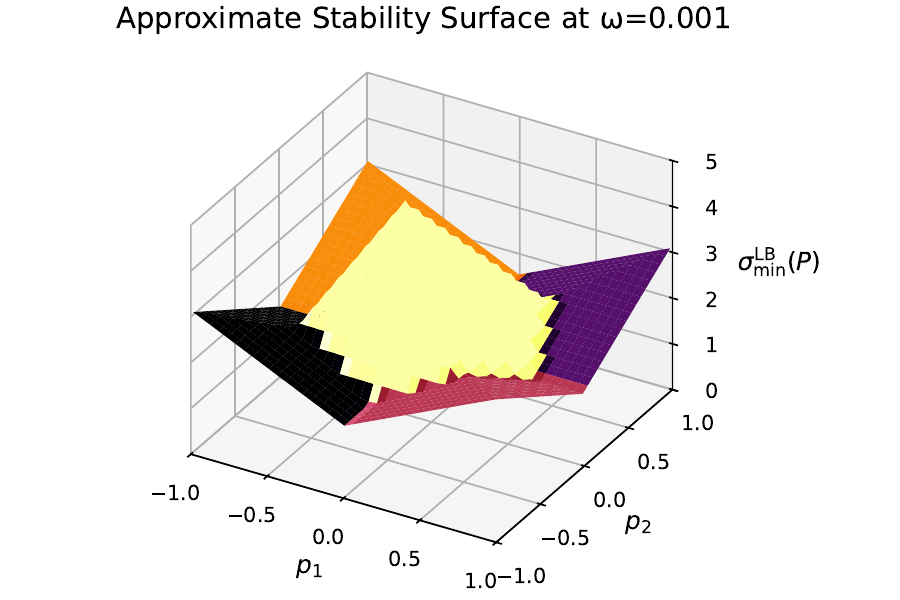}
    \includegraphics[width=0.325\textwidth]{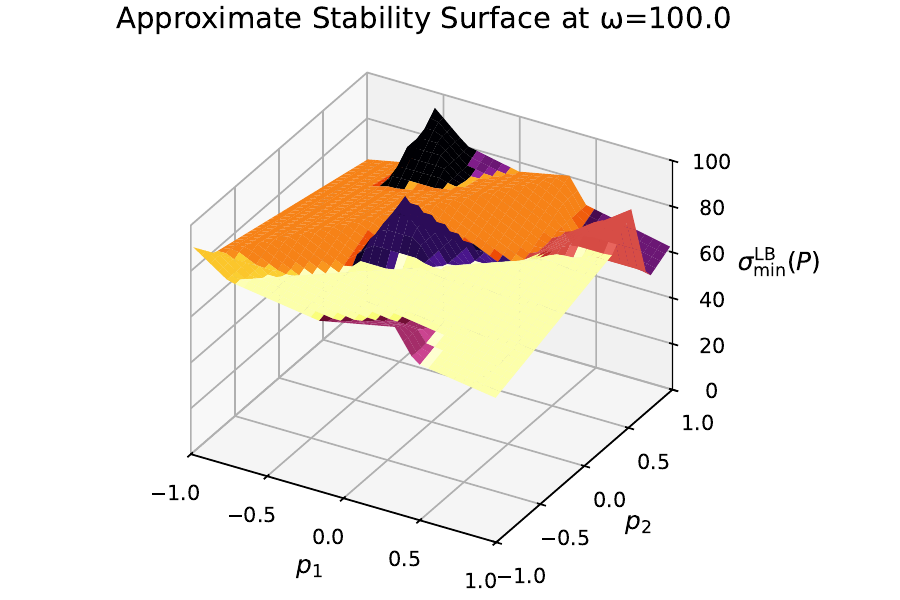}
    \includegraphics[width=0.325\textwidth]{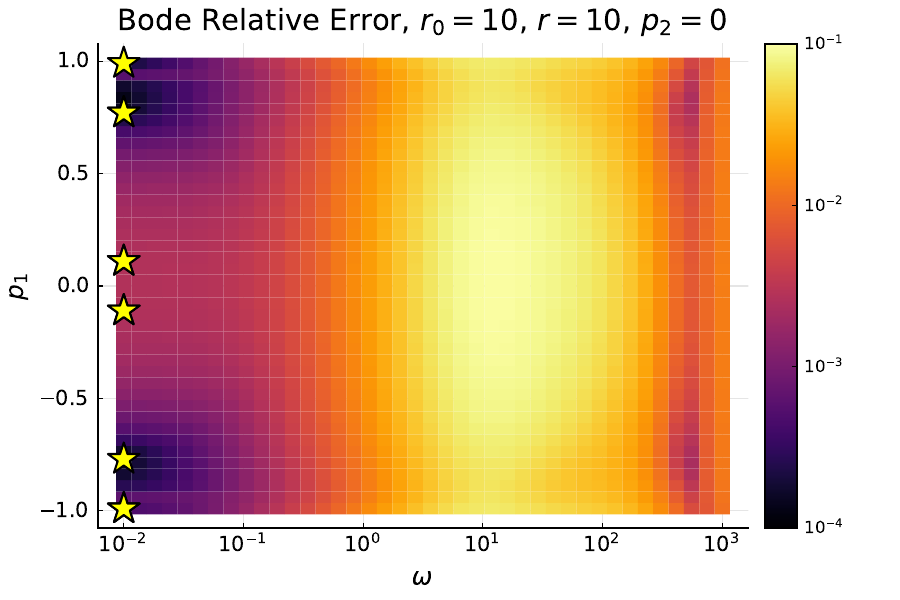}
    \includegraphics[width=0.325\textwidth]{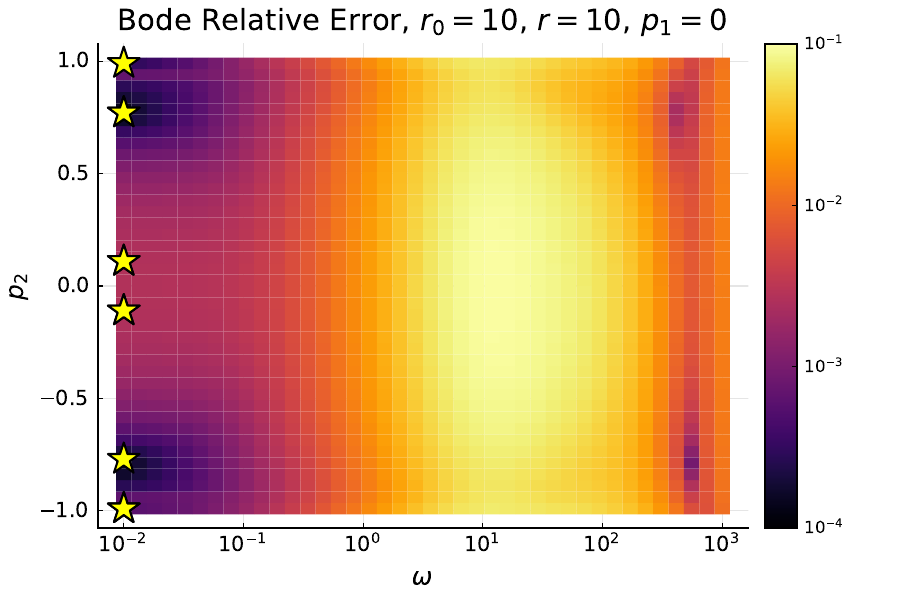}
    \includegraphics[width=0.325\textwidth]{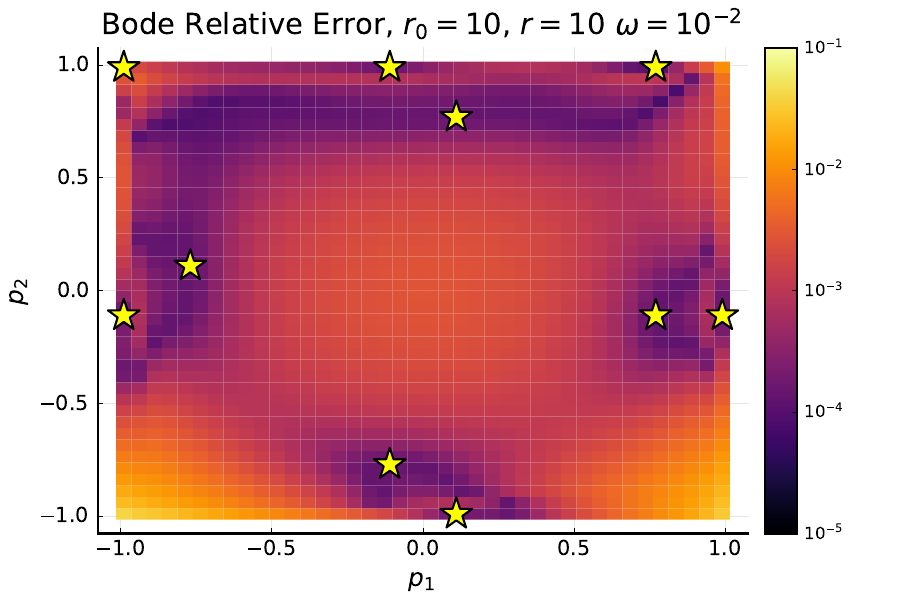}
    \caption{Vanishing diffusion example of \cref{ssec:vandiff}, $n=100^2$. Top row: 
    (left) Upper bound errors refer to $\max_{\bs{P}} \Delta_{\bs\Phi}(\bs{P})\cdot4/\sqrt{n}$, and truth errors are $\max_{\bs{P}} \|e_{\bs{\Phi}}(\bs{P})\|_2\cdot4/\sqrt{n}$. (middle, right): NNSCM-computed stability factors $\sigma_{\min}^{\text{LB}}$ at fixed frequency values; different colors highlight different local linearization choices of $\obs{P}$. Bottom row: Relative error in the transfer function (Bode plot) for various 2D slices of $(\omega, p_1, p_2)$; yellow stars indicate parameter values corresponding to snapshots chosen by RBM.}
    \label{fig:vanishingdiffusion}
\end{figure}

\subsection{Time-Fractional Heat Equation}\label{ssec:fracpde}

The final example we consider is a time-fractional heat equation given by
\[
    \begin{aligned}
        u^\alpha_t &= \Delta u + f(x), & x \in \Omega,\\
        \alpha &\in [0.05,1], &
    \end{aligned}
\]
where the time-fractional derivative is the Caputo-type fractional derivative
\begin{align*}
    u_t^\alpha(t) &= \frac{1}{1 - \alpha} \int_0^t \frac{u_t(s)}{(t-s)^\alpha} ds, & \alpha&\in(0,1).
\end{align*}
Alike how the Laplace transform of $u_t$ with zero initial conditions, is given by $s U$, the Laplace transform of the Caputo-type fractional derivative with zero initial conditions is given by $s^\alpha U$ which matches the standard time partial-derivative when $\alpha=1$ \cite{fahad_time_fractional}. For $s = i\omega \in \C$, we have that
\[s^\alpha = (i\omega)^\alpha = (|\omega| e^{i\cdot\text{sign}(\omega)\pi/2})^\alpha = |\omega|^\alpha(\cos(\alpha\pi/2) + i \cdot\text{sign}(\omega) \sin(\alpha\pi/2)),\]
allowing us to write $s^\alpha\bs{I} - \bs{A}$ in affine parameter dependent form where the coefficients are purely real.

The parameter set is again decomposed into $M=6$ frequency ranges, and $\alpha$ is discretized with $20$ uniformly spaced values between $0.05$ and $1.0$ inclusive. 
The NNSCM is trained with tolerances $\epsilon=0.4$ and $\epsilon_\beta=0.9999$. The frequency parameter discretization for construction of the greedy reduced-basis is fifty logarithmically spaced points between $10^{-2}$ and $10^{3}$.

The results are displayed in \Cref{fig:fracdiffusion}. This model obtains very fast exponential weak greedy error decay. The approximate stability surface appears to be linearly increasing for each fixed $\alpha$ with slope increasing in $\alpha$. For this model, we illustrate transfer function relative errors for increasing complex reduced basis dimensions of $r_0=3,4,5,10$. These figures demonstrate a pattern in the parameter values selected approaching $\alpha=1$ and $\omega=10^3$. 

\begin{figure}[ht!]
    \centering
    \includegraphics[width=0.325\textwidth]{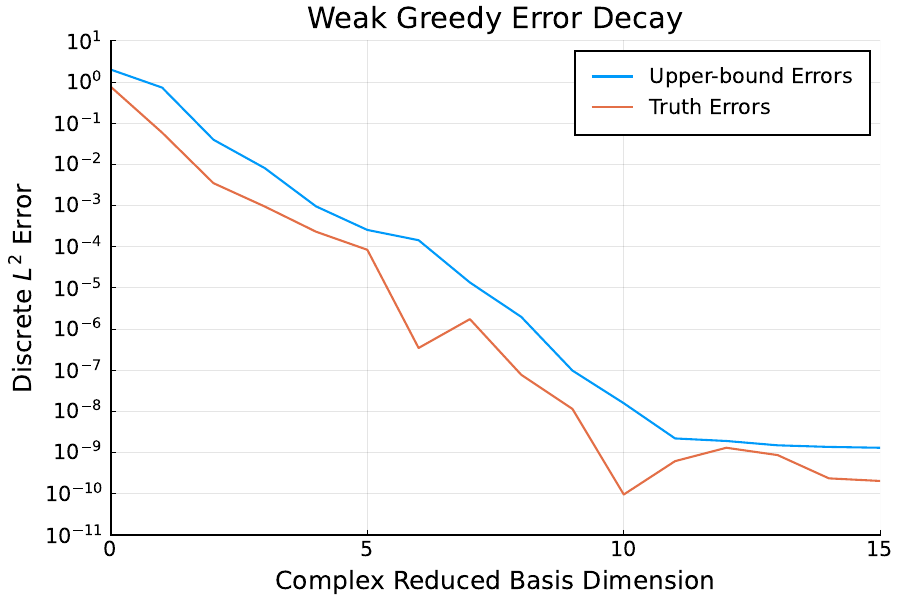}
    \includegraphics[width=0.325\textwidth]{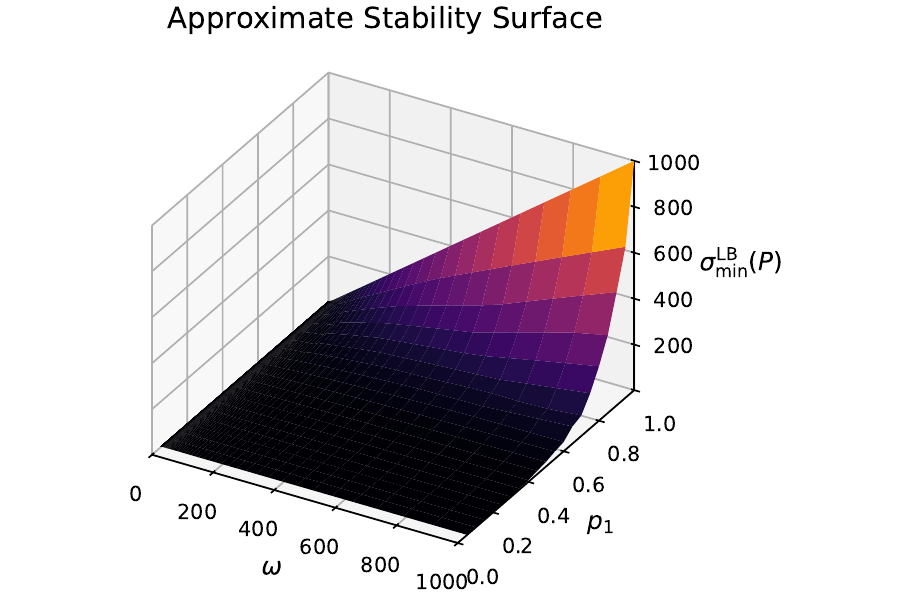}
    \includegraphics[width=0.325\textwidth]{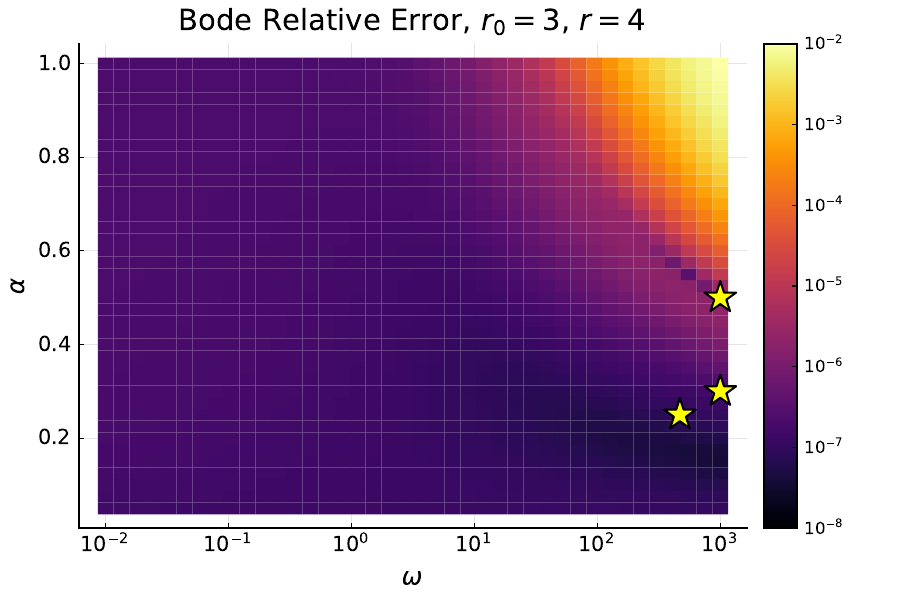}
    \includegraphics[width=0.325\textwidth]{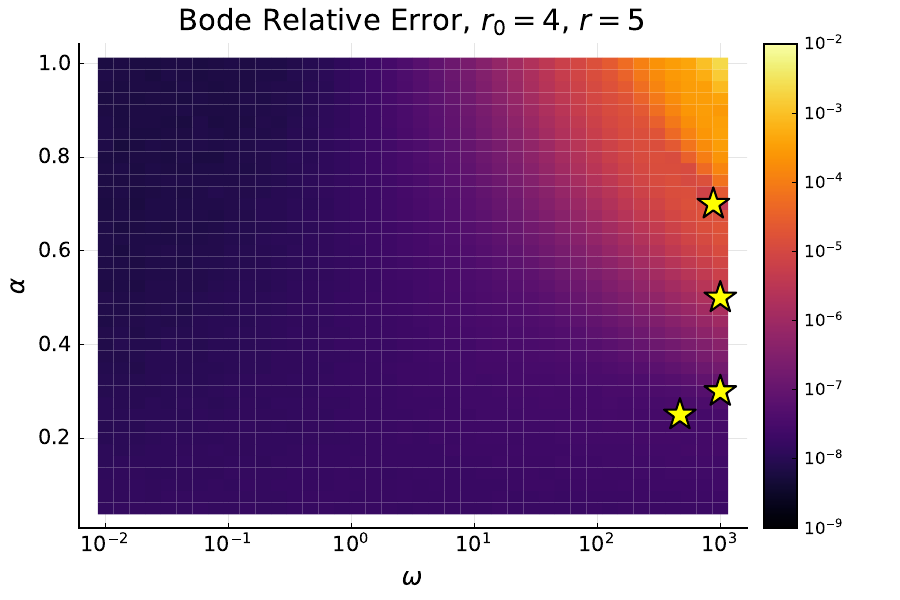}
    \includegraphics[width=0.325\textwidth]{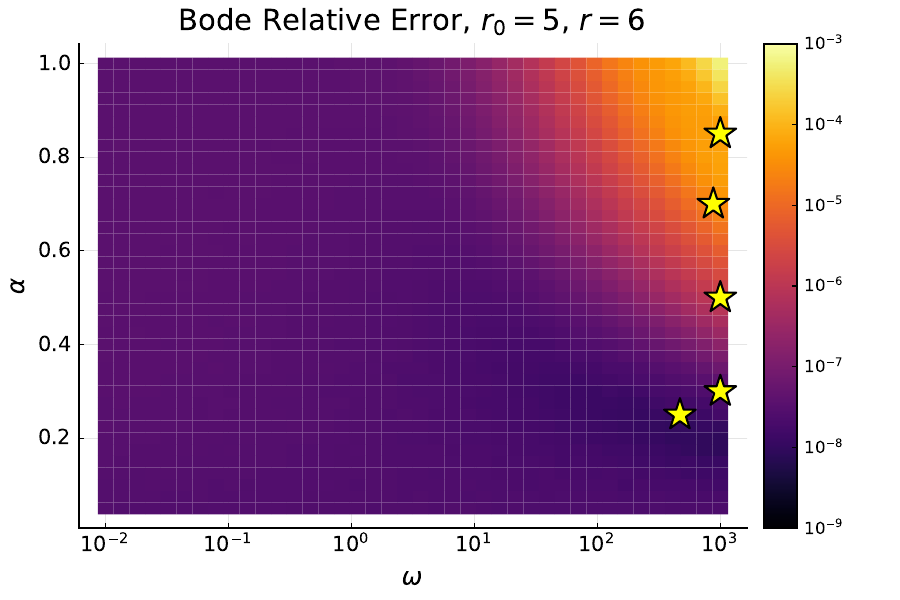}
    \includegraphics[width=0.325\textwidth]{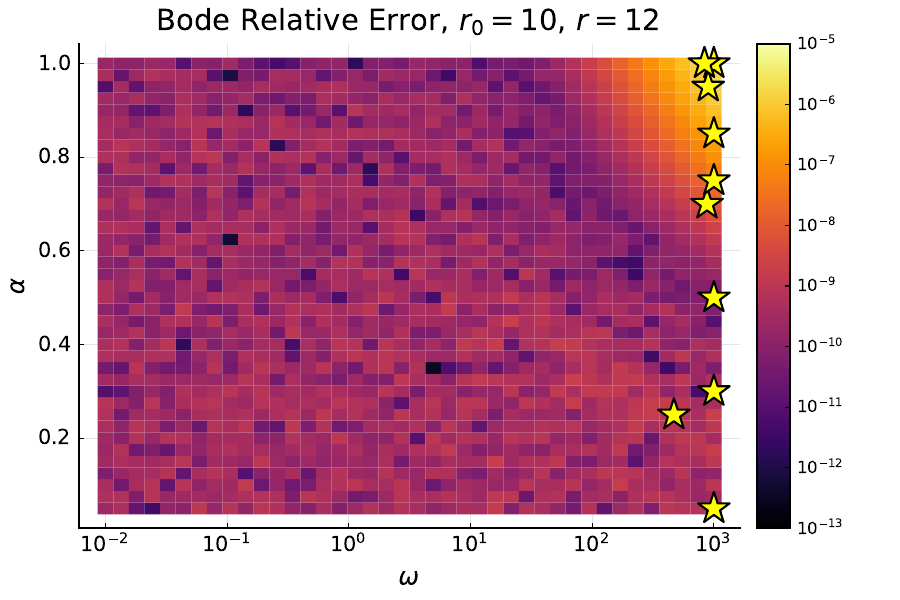}
    \caption{Time-fractional heat equation of \cref{ssec:fracpde}, $n=100^2$. Top row: 
    (left) Upper bound errors refer to $\max_{\bs{P}} \Delta_{\bs\Phi}(\bs{P})\cdot4/\sqrt{n}$, and truth errors are $\max_{\bs{P}} \|e_{\bs{\Phi}}(\bs{P})\|_2\cdot4/\sqrt{n}$. (middle) NNSCM-computed stability factors $\sigma_{\min}^{\text{LB}}$. 
    Top row (right) and bottom row: Relative error in the transfer function (Bode plot) for increasing values of $r_0$; yellow stars indicate parameter values corresponding to snapshots chosen by RBM.}
    \label{fig:fracdiffusion}
\end{figure}

\section{Conclusion}
\label{sec:conclusion}
We have explored a scheme that greedily constructs a (multivariate) rational function approximation to the transfer function of parametric LTI systems. Under suitable assumptions,  this approximation is known to converge at rates comparable to decay of a Kolmogorov $n$-width. We have proposed and numerically investigated a modified RBM algorithm, suitable for complex-valued and unbounded parameter sets, that implements the greedy scheme. We show that this procedure is effective on a wide variety of parametric LTI problems.

\section{Acknowledgments}
AN and FB were partially supported by AFOSR FA9550-23-1-0749. AN was partially supported by NSF DMS-2529303. YC was partially supported by NSF DMS-2208277 and AFOSR FA9550-25-1-0181.

\section{Data Availability Statement}
The authors confirm that no databases were used for this paper and that the figures can be reproduced using the open-source software package \texttt{ModelOrderReductionToolkit.jl} \cite{mortjl} and the Gurobi solver \cite{gurobi} along with the hyperparameters detailed in the manuscript. 

\printbibliography

\newpage
\appendix
\section{NNSCM Algorithm}\label{sec:nnscm-alg}

\begin{algorithm}
    \caption{NNSCM}\label{alg:nnscm}
    \begin{flushleft}
        \textbf{Input: } Affine parameter dependent matrix $\bs M(\bs P)$, parameter grid $\widetilde{\Pi}$, stability parameter $M_\alpha \in \Z_{\geq0}$, sample flag \texttt{inside}, tolerances $\epsilon,\epsilon_{\beta},\varphi$ \\
        \textbf{Output: } NNSCM approximant of $\sigma_\text{min}(\bs p)$
    \end{flushleft}
    \begin{algorithmic}[1]
        \State Compute $\sigma_\text{max}(\bs M_j)$ for $j=1,\ldots,Q_A$ \Comment{$Q_A$ eigenvalue problems}
        \State $\mathcal{C}_0 \gets \emptyset$
        \For{$k=1,2,\ldots$}
            \If{$\max_{\bs P \in \widetilde{\Pi}} \epsilon(\bs p) < \epsilon$}
                \State Break
            \EndIf{}
            \State Select $\obs P_k = \argmax_{\bs P \in \widetilde{\Pi}} \epsilon(\bs P)$ \Comment{Other greedy approaches exist \cite{chen_certified_2016}}
            \State $\mathcal{C}_k \gets \mathcal{C}_{k-1} \cup \{\obs P_k\}$
            \State Compute $\sigma_\text{min}(\obs P_{k})$, $\bs z(\bs v(\obs P_k, \obs P_k),\obs P_k)$ \Comment{Eigenvalue problem}
            \State $\mathcal{C}_1(\obs P_k) \gets \{\obs P_k\}$ \Comment{Form $\obs P_k$-subsample set}
            \State $\mathcal{D}_0 \gets \emptyset$
            \For{$j=1,2,\ldots$}
                \State Compute $\epsilon_\beta(\bs P, \obs P_k, \mathcal{C}_j(\obs P_k))$ for each $\bs P \in \widetilde{\Pi}$ \Comment{$|\widetilde{\Pi}|$ LPs}
                \State $\mathcal{D}_j \gets \{\bs P \in \widetilde{\Pi}\;\mid\beta^\text{LB}(\bs P, \obs P_k, \mathcal{C}_j(\obs P_k))>\varphi\}$ \Comment{Current domain}
                \If{$\mathcal{D}_j \setminus \mathcal{D}_{j-1} = \emptyset$ and $\max_{\bs P \in \mathcal{D}_j} \epsilon_\beta(\bs P, \obs P_k, \mathcal{C}_j(\obs P_k)) < \epsilon_\beta$}
                    \State Break \Comment{Domain did not increase and sufficiently constrained}
                \EndIf{}
                \State $\bs P_{j,1} \gets \argmax_{\bs P\in\widetilde{\Pi}} \epsilon_\beta(\bs P, \obs P_k, \mathcal{C}_j(\obs P_k))$
                \State Compute $\beta(\bs P_{j,1},\obs P_k)$, $\bs z(\bs v(\bs P_{j,1}, \obs P_k),\obs P_k)$ \Comment{Eigenvalue problem}
                \State $\mathcal{C}_{j+1}(\obs P_k) \gets \mathcal{C}_j(\obs P_k) \cup \{\bs P_{j,1}\}$
                \If{\texttt{inside}} \Comment{Also enrich inside $\mathcal{D}_j$}
                    \State $\bs P_{j,2} \gets \argmax_{\bs P\in\mathcal{D}_j} \epsilon_\beta(\bs P, \obs P_k, \mathcal{C}_j(\obs P_k))$
                    \If{$\epsilon_\beta(\bs P_{j,2}, \obs P_k, \mathcal{C}_j(\obs P_k)) > \epsilon_\beta$}
                        \State Compute $\beta(\bs P_{j,2},\obs P_k)$, $\bs z(\bs v(\bs P_{j,2}, \obs P_k),\obs P_k)$ \Comment{Eigenvalue problem}
                        \State $\mathcal{C}_{j+1}(\obs P_k) \gets \mathcal{C}_{j+1}(\obs P_k) \cup \{\bs P_{j,2}\}$
                    \EndIf{}
                \EndIf{}
            \EndFor
        \EndFor{}
    \end{algorithmic}
\end{algorithm}

\end{document}